\theoremstyle{plain}
\numberwithin{equation}{section}
\newtheorem{theo}[equation]{Theorem}
\newtheorem{pro}[equation]{Proposition}
\newtheorem{lem}[equation]{Lemma}
\newtheorem{defi}[equation]{Definition}
\newtheorem{rem}[equation]{Remark}
\newtheorem{exa}[equation]{Example}
\newtheorem*{lemma}{Lemma}
\def\ot{\otimes}
\def\lan{\langle}
\def\ran{\rangle}
\def\Ga{\Gamma}
\def\ep{\epsilon}
\def\de{\delta}
\def\pa{\partial}
\def\c{{\bf c}}
\newcommand{\E}{{\mathcal{E}}}
\newcommand{\D}{{\mathcal D}}
\newcommand{\K}{{\mathcal K}}
\newcommand{\A}{{\mathcal A}}
\def\bN{{\mathbb N}}
\def\bZ{{\mathbb Z}}
\def\bC{{\mathbb C}}
\def\mg{\mathfrak{g}}
\def\Res{\mbox{\rm Res}}
\def\Hom{\mbox{\rm Hom}}
\def\Ker{\mbox{\rm Ker}\,}
\def\End{\text{\rm End}}
\begin{document}
\title[]
{$\phi_\ep$-coordinated modules for vertex algebras}

\author{Chengming Bai}
\address{Chern Institute of Mathematics \& LPMC, Nankai University, Tianjin 300071, P.R. China}
         \email{baicm@nankai.edu.cn}

\author{Haisheng Li}
\address{Department of Mathematical Sciences, Rutgers University, Camden, NJ 08102, United States}
\email{hli@camden.rutgers.edu}

\author{Yufeng Pei$^*$}
\address{Department of Mathematics, Shanghai Normal University, Guilin Road 100,
Shanghai 200234, P.R. China}\email{pei@shnu.edu.cn}

\thanks{$\mbox{}^*$ Corresponding author}


\begin{abstract}
We study $\phi_{\ep}$-coordinated modules for vertex algebras, where $\phi_{\ep}$ with $\ep$ an integer parameter is a family of associates of the one-dimensional additive formal group.
As the main results, we obtain a Jacobi type identity and a commutator formula for $\phi_{\ep}$-coordinated modules.
We then use these results to study $\phi_{\ep}$-coordinated modules for vertex algebras associated to
Novikov algebras by Primc.
\end{abstract}
\maketitle



\section{Introduction}

In a program to associate quantum vertex algebras to quantum affine algebras, a theory of $\phi$-coordinated (quasi) modules for quantum vertex algebras was developed in \cite{L6}, where $\phi$ is what was called an associate of the one-dimensional additive formal group
(law) $F_{\rm a}(x,y)=x+y$.

Since the very beginning, it had been recognized that the theory of vertex algebras and their modules
was governed by the formal group $F_{\rm a}$. This can be seen from the definition of a vertex algebra $V$ and more generally the definition of a module $(W,Y_{W})$ for a given vertex algebra $V$,
where the weak associativity axiom for a $V$-module $(W,Y_{W})$ states that
for any $u,v\in V,\ w\in W$, there exists a nonnegative integer $l$ such that
\begin{eqnarray}
(x+y)^{l}Y_{W}(u,x+y)Y_{W}(v,y)w=(x+y)^{l}Y_{W}(Y(u,x)v,y)w.
\end{eqnarray}

The notion of associate of the formal group $F_{\rm a}$ was designed in \cite{L6}
to be an analog of that of $G$-set of a group $G$.
By definition, an associate of $F_{\rm a}$ is a formal series
$\phi(x,z)\in \bC((x))[[z]]$ such that
$$\phi(x,0)=x\ \ \mbox{ and }\ \ \ \phi(x,\phi(y,z))=\phi(x,y+z).$$
Interestingly, it was proved therein that for any $p(x)\in \bC((x))$, $\phi_{p(x)}(x,z):=e^{zp(x)d/dx}x$ is an associate of $F_{\rm a}$
and every associate of $F_{\rm a}$ is of this form. When $p(x)=1$, we get the formal group itself, whereas
when $p(x)=x$, we get $\phi_{p(x)}(x,z)=xe^{z}$.

Let $\phi(x,z)$ be a general associate of $F_{\rm a}$.
The notion of $\phi$-coordinated (quasi) $V$-module for a vertex algebra (more generally for a weak quantum vertex algebra in the sense of \cite{L2}) $V$ was defined by replacing the ordinary weak associativity axiom
with the property that for any $u,v\in V$, there is a nonnegative integer $k$ such that
$$(x_{1}-x_{2})^{k}Y_{W}(u,x_{1})Y_{W}(v,x_{2})\in \Hom (W,W((x_{1},x_{2})))$$ and
\begin{eqnarray}
\left((x_{1}-x_{2})^{k}Y_{W}(u,x_{1})Y_{W}(v,x_{2})\right)|_{x_{1}=\phi(x_{2},z)}
=(\phi(x_{2},z)-x_{2})^{k}Y_{W}(Y(u,z)v,x_{2}).
\end{eqnarray}
When $\phi(x,z)=F_{\rm a}(x,z)=x+z$, this gives an equivalent definition of the ordinary notion of module
(cf. \cite{LTW}).
In \cite{L6}, the main focus is on $\phi$-coordinated (quasi) modules with $\phi(x,z)=e^{zx\frac{d}{dx}}x=xe^{z}$, by which weak quantum vertex algebras
were canonically associated to quantum affine algebras.
Among the main results, a Jacobi-type identity and a commutator formula for
$\phi$-coordinated modules were obtained.
Later in \cite{L10}, $\phi$-coordinated quasi modules were studied furthermore, where
a commutator formula, similar to that for twisted modules (see \cite{FLM}), was obtained.
Just as commutator formulas for modules and twisted modules are very important and useful
in the vertex algebra theory, such commutator formulas for $\phi$-coordinated (quasi) modules were proved to be very useful.

In this current paper, we study $\phi_{\ep}$-coordinated modules for vertex algebras,
where $\phi_{\ep}(x,z)=e^{zx^{\ep}\frac{d}{dx}}x$ with $\ep$ an {\em arbitrary} integer.
Part of our motivation is the fact
that $x^{n}\frac{d}{dx}$ with $n\in \bZ$ form a basis of the Witt algebra,
which plays a vital role in vertex operator algebra theory and in physics conformal field theory.
Conceivably, $\phi_{\ep}$-coordinated modules for vertex algebras will be of fundamental
importance.
Among the main results, we show that if $U$ is a local subset of $\Hom (W,W((x)))$ with $W$ a general vector space, the nonlocal vertex algebra $\lan U\ran_{\phi_{\ep}}$ that was obtained in \cite{L6} is a vertex algebra.
We also obtain a Jacobi-type identity for $\phi_{\ep}$-coordinated modules for vertex algebras and furthermore we derive a commutator formula.

In this paper, we also study $\phi_{\ep}$-coordinated modules for some special family of vertex algebras.
Note that for any Lie algebra $\mg$ equipped with a symmetric invariant bilinear form $\lan\cdot,\cdot\ran$,
one has an (untwisted) affine Lie algebra $\hat{\mg}=\mg\otimes \bC[t,t^{-1}]+\bC \c$. Furthermore, for every complex number $\ell$, one has a vertex algebra $V_{\hat{\mg}}(\ell,0)$.
This gives a very important family of vertex algebras.
There is another family of vertex algebras which are associated to Novikov algebras.
Recall that a (left) Novikov algebra is a non-associative algebra $\A$ satisfying the condition that
\begin{eqnarray*}
(ab)c-a(bc)=(ba)c-b(ac),\ \ \ \
(ab)c=(ac)b\ \ \mbox{ for }a,b,c\in \A.
\end{eqnarray*}
A result of Primc (see \cite{Pr}) is that for any given (left) Novikov algebra $\A$ equipped with a symmetric bilinear form $\lan\cdot,\cdot\ran$ such that
$$\lan ab,c\ran=\lan a,bc\ran,\ \ \ \lan ab,c\ran=\lan ba,c\ran\ \ \mbox{ for }a,b,c\in \A,$$
one has a Lie algebra $\widetilde{L}(\A)=\A\otimes \bC[t,t^{-1}]+\bC \c$, and furthermore, for every complex number $\ell$ one has a vertex algebra $V_{\widetilde{L}(\A)}(\ell,0)$.

In this current paper, as an application of our general results we study $\phi_{\ep}$-coordinated modules for vertex algebras $V_{\widetilde{L}(\A)}(\ell,0)$ associated to Novikov algebras $\A$. To determine $\phi_{\ep}$-coordinated modules we introduce a Lie algebra $\widetilde{L}^{\ep}(\A)$, which has the same underlying space as that of $\widetilde{L}(\A)$. We show that a $\phi_{\ep}$-coordinated module structure on a vector space for vertex algebra $V_{\widetilde{L}(\A)}(\ell,0)$
exactly amounts to a ``restricted'' module structure for the Lie algebra $\widetilde{L}^{\ep}(\A)$ of level $\ell$.

Throughout this paper, $\bZ$ and $\bN$ denote the set of nonnegative integers
and the set of integers, respectively, and all vector spaces are assumed to be over the
field $\bC$ of complex numbers. On the other hand,
we shall use the standard formal variable notations and conventions (see \cite{FLM}, \cite{FHL}, and \cite{LL}).

This paper is organized as follows: In Section 2, we recall the basic results on associates and we derive an explicit formula for $\phi_{\ep}$. In Section 3, we prove a Jacobi-type identity and a commutator formula for $\phi_{\ep}$-coordinated modules.
In Section 4, we study $\phi_{\ep}$-coordinated modules for the vertex algebras associated to Novikov algebras.

\section{One-dimensional additive formal group and $\phi_\ep$-coordinated modules for vertex algebras}
\subsection{}
We here briefly recall the notion of formal group (cf. \cite{Ha}) and the
notion of associate of a formal group (see \cite{L6}).

\begin{defi}\label{FG}
{\em A {\em one-dimensional formal group} over $\bC$ is a formal power series
$F(x, y)\in \bC[[x, y]]$ such that
$$
F(x, 0) = x,\quad F(0, y) = y,\quad F(x, F(y, z))= F(F(x, y), z).
$$}
\end{defi}

The simplest example is the one-dimensional additive formal group
\begin{eqnarray}
F_{\rm a}(x, y) = x + y.
\end{eqnarray}

The following notion of associate of a formal group was
introduced in \cite{L6}:

\begin{defi}\label{ASSO}
{\em Let $F(x, y)$ be a one-dimensional formal group over $\bC$. An {\em associate} of
$F(x, y)$ is a formal series $\phi(x, z) \in \bC((x))[[z]]$, satisfying the condition that
$$
\phi(x, 0) = x,\quad \phi(\phi(x, x_0), x_2) = \phi(x, F(x_0, x_2)).
$$}
\end{defi}

The following is an explicit classification of associates for $F_{\rm a}(x, y)$ obtained in \cite{L6}:

\begin{pro}
Let $p(x)\in\bC((x))$. Set
$$\phi(x,z)=e^{z(p(x)d/dx)}x=\sum_{n\geq0}\frac{z^n}{n!}\left(p(x)\frac{d}{d x}\right)^nx\in \bC((x))[[z]].$$
Then $\phi(x, z)$ is an associate of $F_{\rm a}(x, y)$. Furthermore, every associate of
$F_{\rm a}(x, y)$ is of this form with $p(x)$ uniquely determined.
\end{pro}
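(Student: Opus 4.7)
The plan is to treat $D_p := p(x)\frac{d}{dx}$ as a formal derivation of $\bC((x))$ and think of $e^{zD_p}$ as its formal flow. Since $D_p$ preserves $\bC((x))$, each iterate $D_p^n x$ lies in $\bC((x))$, so $\phi(x,z) = e^{zD_p}x$ is a well-defined element of $\bC((x))[[z]]$ satisfying $\phi(x,0) = x$ tautologically. To verify the group law, I would first establish a \emph{substitution lemma}: for every $f(x) \in \bC((x))$,
$$
e^{zD_p} f(x) = f(\phi(x,z)) \quad \text{in } \bC((x))[[z]].
$$
This is tautological for $f(x) = x$, and since $D_p$ is a derivation, the Leibniz rule forces $e^{zD_p}$ to be an algebra homomorphism, so the identity extends to all polynomial and then Laurent expressions in $x$ (noting that $\phi(x,z) = x + O(z)$ is invertible in $\bC((x))[[z]]$). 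Combining this with the trivial semigroup identity $e^{(x_0 + x_2)D_p} = e^{x_0 D_p}e^{x_2 D_p}$ applied to $x$ yields
\begin{eqnarray*}
\phi(x, x_0 + x_2) = e^{x_0 D_p}\bigl(e^{x_2 D_p} x\bigr) = e^{x_0 D_p}\phi(x, x_2) = \phi(\phi(x, x_0), x_2),
\end{eqnarray*}
where the final step applies the substitution lemma coefficient-by-coefficient in $x_2$.

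For the classification half, given an arbitrary associate $\phi(x,z)$, the only candidate for $p$ is $p(x) := \partial_z \phi(x, z)|_{z=0} \in \bC((x))$; this forces uniqueness of $p$ automatically. To show that $\phi$ in fact has the form $e^{zD_p}x$, I would differentiate the associativity relation $\phi(\phi(x, x_0), x_2) = \phi(x, x_0 + x_2)$ in $x_2$ and set $x_2 = 0$: the left side produces $p(\phi(x, x_0))$ while the right side produces $\partial_{x_0}\phi(x, x_0)$, giving the formal ODE
$$
\partial_z \phi(x, z) = p(\phi(x, z)), \qquad \phi(x, 0) = x.
$$
Writing $\phi(x, z) = \sum_{n \ge 0} \phi_n(x)\, z^n$, this ODE determines $\phi_{n+1}$ recursively from $\phi_0, \dots, \phi_n$ and $p$, so any two solutions in $\bC((x))[[z]]$ must coincide. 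Since the existence half shows that $e^{zD_p}x$ solves the same ODE, the two solutions agree, proving $\phi(x, z) = e^{zD_p}x$.

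The main obstacle is the substitution lemma in the formal setting: it is the algebraic analog of the classical fact that the flow of a vector field transports functions via composition, but here it must be justified purely within $\bC((x))[[z]]$. The cleanest route uses that $e^{zD_p}$ is an algebra endomorphism of $\bC((x))$ (valued in $\bC((x))[[z]]$) by the Leibniz rule; once this is in place, agreement on the single generator $x$ propagates to all Laurent polynomials, and then to $\bC((x))$ via the evident continuity in the $z$-adic topology. A minor care point is ensuring $\phi(x,z)^{-1}$ exists in $\bC((x))[[z]]$, which is automatic from $\phi(x,z) = x + z\,p(x) + O(z^2)$ having invertible leading term $x$ in $\bC((x))$.
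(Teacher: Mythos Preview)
The paper does not actually prove this proposition: it is quoted verbatim as a result ``obtained in \cite{L6}'' and no argument is given. So there is nothing in the present paper to compare your proposal against.

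That said, your proposal is sound and is essentially the standard argument (and, indeed, the one used in \cite{L6}). The substitution lemma $e^{zD_p}f(x)=f(\phi(x,z))$ is exactly the right tool; it follows, as you say, from the fact that $e^{zD_p}$ is a ring homomorphism $\bC((x))\to\bC((x))[[z]]$ because $D_p$ is a derivation, together with invertibility of $\phi(x,z)$ coming from $\phi(x,0)=x$. Your derivation of the ODE $\partial_z\phi=p(\phi)$ by differentiating the associativity relation at $x_2=0$ is correct, and the recursive uniqueness argument in $\bC((x))[[z]]$ is valid. The only point worth tightening is the passage from Laurent polynomials to all of $\bC((x))$ in the substitution lemma: rather than invoking continuity, it is cleaner to note that for any $f\in\bC((x))$ each coefficient $D_p^k f$ of $z^k$ in $e^{zD_p}f$ is already a well-defined element of $\bC((x))$, and the identity $e^{zD_p}f=f(\phi(x,z))$ can be checked one $z$-coefficient at a time using Fa\`a di Bruno / the chain rule, which for a fixed power of $z$ involves only finitely many $\phi_j$'s.
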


For any integer $\ep$, set
\begin{eqnarray}
\phi_\ep(x,z)= e^{z(x^{\ep} d/dx)}x,
\end{eqnarray}
an associate of $F_{\rm a}(x, y)$.
For the rest of this paper, we shall be only concerned about the
$1$-dimensional additive formal group $F_{\rm a}(x, y)$ and its associates $\phi_\ep(x,z)$.

As special cases, we have (see \cite{L6})
\begin{eqnarray}
&&\phi_{0}(x,z)= x+z,\quad  \phi_1(x,z)= xe^z,\quad \phi_2(x,z)= \frac{x}{1-zx}.
\end{eqnarray}
For the general case, from definition we have
\begin{eqnarray}
\phi_{\ep+1}(x,z)&=&x+\sum_{k\geq 1} \frac{z^k}{k!}x^{k\ep+1}\prod_{j=0}^{k-1}(1+j\ep).
\end{eqnarray}
Assume $\ep\neq 0$.  For $k\ge 1$, we have
\begin{eqnarray*}
\frac{1}{k!}\prod_{j=0}^{k-1}(1+j\ep)=(-\ep)^{k}\frac{1}{k!}\prod_{j=0}^{k-1}(-\frac{1}{\ep}-j)
=(-\ep)^{k}\binom{-\frac{1}{\ep}}{k}.
\end{eqnarray*}
Then
\begin{eqnarray}
\phi_{\ep+1}(x,z)=x+x\sum_{k\ge 1}(-\ep)^{k}\binom{-\frac{1}{\ep}}{k}(zx^{\ep})^{k}
=x(1-\ep zx^\ep)^{-\frac{1}{\ep}}.
\end{eqnarray}
Notice that
\begin{eqnarray}
\lim_{\ep\to 0}\phi_{\ep+1}(x,z)=xe^z=\phi_1(x,z).
\end{eqnarray}

The following are some simple facts we shall use:

\begin{lem}\label{esimple-fact}
 We have
\begin{eqnarray}
 \phi_{\ep}(x,z)-x&=&zh(x,z),\\
\phi_{\ep}(x,x_{1})-\phi_{\ep}(x,x_{2})&=&(x_{1}-x_{2})g(x,x_{1},x_{2}),
\end{eqnarray}
where $h(x,z)$ is a unit in $\bC((x))[[z]]$ and
$g(x,x_{1},x_{2})$ is a unit in $\bC((x))[[x_{1},x_{2}]]$.
\end{lem}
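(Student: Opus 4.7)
The plan is to work directly from the exponential series expansion
\[
\phi_{\ep}(x,z) = e^{z x^{\ep} d/dx}\, x = x + \sum_{k\geq 1}\frac{z^{k}}{k!}\,\bigl(x^{\ep}\tfrac{d}{dx}\bigr)^{k}x,
\]
which lies in $\bC((x))[[z]]$. Both assertions then follow by explicitly factoring and checking that the constant term (in $z$, respectively in $x_1,x_2$) is a unit in $\bC((x))$. Recall the standard fact that a formal series in $\bC((x))[[z_1,\dots,z_n]]$ is a unit if and only if its constant term (in the $z_i$) is a unit in $\bC((x))$; I will use this freely.

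For the first identity, subtracting $x$ kills the $k=0$ term and $z$ can be pulled out of every remaining summand. Thus
\[
\phi_{\ep}(x,z)-x = z\,h(x,z),\qquad h(x,z) := \sum_{k\geq 0}\frac{z^{k}}{(k+1)!}\,\bigl(x^{\ep}\tfrac{d}{dx}\bigr)^{k+1}x \in \bC((x))[[z]].
\]
The constant term in $z$ of $h(x,z)$ is $(x^{\ep}d/dx)x = x^{\ep}$, which is a unit in $\bC((x))$ since $\ep\in\bZ$. Hence $h$ is a unit, as required.

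For the second identity, substituting $z=x_1$ and $z=x_2$ into the series and subtracting gives
\[
\phi_{\ep}(x,x_{1})-\phi_{\ep}(x,x_{2}) = \sum_{k\geq 1}\frac{x_{1}^{k}-x_{2}^{k}}{k!}\,\bigl(x^{\ep}\tfrac{d}{dx}\bigr)^{k}x.
\]
The elementary identity $x_{1}^{k}-x_{2}^{k}=(x_{1}-x_{2})\sum_{j=0}^{k-1}x_{1}^{j}x_{2}^{k-1-j}$ lets me factor $(x_{1}-x_{2})$ out, yielding
\[
\phi_{\ep}(x,x_{1})-\phi_{\ep}(x,x_{2}) = (x_{1}-x_{2})\,g(x,x_{1},x_{2}),\qquad g(x,x_{1},x_{2}) := \sum_{k\geq 1}\frac{1}{k!}\bigl(x^{\ep}\tfrac{d}{dx}\bigr)^{k}x\,\sum_{j=0}^{k-1}x_{1}^{j}x_{2}^{k-1-j}.
\]
This lies in $\bC((x))[[x_{1},x_{2}]]$, and its constant term in $(x_{1},x_{2})$ is again the $k=1$ contribution $(x^{\ep}d/dx)x = x^{\ep}$, which is a unit. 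Therefore $g$ is a unit, completing the proof.

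There is no real obstacle; the only point to watch is that $\ep$ is allowed to be any integer (including negative), so $x^{\ep}$ must be interpreted in $\bC((x))$ rather than $\bC[[x]]$, and the invertibility of the constant coefficient must be checked in $\bC((x))$. One could alternatively deduce the same conclusions from the closed forms $\phi_{0}(x,z)=x+z$, $\phi_{1}(x,z)=xe^{z}$, and the explicit expression $\phi_{\ep+1}(x,z)=x(1-\ep zx^{\ep})^{-1/\ep}$ derived above, but the series-level argument treats all $\ep$ uniformly and is shorter.
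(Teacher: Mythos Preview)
Your proof is correct. The first identity is handled exactly as in the paper: factor $z$ out of the exponential series and observe that the constant term in $z$ is a unit in $\bC((x))$. (You correctly identify this constant term as $x^{\ep}$; the paper records it as $1$, which appears to be a slip, but either way one has a unit in $\bC((x))$.)

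For the second identity you and the paper take different but equally valid routes. You subtract the two series termwise and use the algebraic factorization $x_{1}^{k}-x_{2}^{k}=(x_{1}-x_{2})\sum_{j}x_{1}^{j}x_{2}^{k-1-j}$, then read off the constant term in $(x_{1},x_{2})$. The paper instead exploits the commuting-operator identity
\[
e^{x_{1}x^{\ep}\frac{d}{dx}}-e^{x_{2}x^{\ep}\frac{d}{dx}}=\bigl(e^{(x_{1}-x_{2})x^{\ep}\frac{d}{dx}}-1\bigr)e^{x_{2}x^{\ep}\frac{d}{dx}},
\]
obtaining $g(x,x_{1},x_{2})=\sum_{j\ge 1}\frac{1}{j!}(x_{1}-x_{2})^{j-1}\bigl(x^{\ep}\frac{d}{dx}\bigr)^{j}\phi_{\ep}(x,x_{2})$ and checking invertibility by specializing $x_{1}=x_{2}$ rather than $x_{1}=x_{2}=0$. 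Your argument is slightly more elementary and manifestly symmetric in $x_{1},x_{2}$; the paper's makes direct use of the one-parameter group structure of $\phi_{\ep}$. Neither approach offers any particular advantage downstream.
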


\begin{proof} By definition we have
$$\phi_{\ep}(x,z)-x=z\sum_{j\ge 1}\frac{1}{j!}z^{j-1}\left(x^{\ep}\frac{d}{dx}\right)^{j}x.$$
Set $h(x,z)=\sum_{j\ge 1}\frac{1}{j!}z^{j-1}\left(x^{\ep}\frac{d}{dx}\right)^{j}x\in \bC((x))[[z]]$.
As $h(x,0)=1$, $h(x,z)$ is a unit in $\bC((x))[[z]]$.
On the other hand, by definition we have
\begin{eqnarray*}
&&\phi_{\ep}(x,x_{1})-\phi_{\ep}(x,x_{2})
=\left(e^{x_{1}x^{\ep}\frac{d}{dx}}-e^{x_{2}x^{\ep}\frac{d}{dx}}\right)x
=\left(e^{(x_{1}-x_{2})x^{\ep}\frac{d}{dx}}-1\right)e^{x_{2}x^{\ep}\frac{d}{dx}}\cdot x\\
&&=(x_{1}-x_{2})\sum_{j\ge 1}\frac{1}{j!}(x_{1}-x_{2})^{j-1}
\left(x^{\ep}\frac{d}{dx}\right)^{j}\phi_{\ep}(x,x_{2}).
\end{eqnarray*}
Set
$$g(x,x_{1},x_{2})=\sum_{j\ge 1}\frac{1}{j!}(x_{1}-x_{2})^{j-1}
\left(x^{\ep}\frac{d}{dx}\right)^{j}\phi_{\ep}(x,x_{2}).$$
We have $g(x,x_{1},x_{2})\in \bC((x))[[x_{1},x_{2}]]$ and $g(x,x_{2},x_{2})=\phi_{\ep}(x,x_{2})$.
Note that $\phi_{\ep}(x,x_{2})$ is a unit in $\bC((x))[[x_{2}]]$
as $\phi_{\ep}(x,0)=x$ (nonzero in $\bC((x))$).
It then follows that $g(x,x_{1},x_{2})$ is a unit in $(\bC((x))[[x_{2}]])[[x_{1}]]=\bC((x))[[x_{1},x_{2}]]$.
\end{proof}

\subsection{}
Next we recall the definition of a (nonlocal) vertex algebra
and the definitions of a module and a $\phi_\ep$-coordinated module for a (nonlocal) vertex
algebra. The notion of nonlocal vertex algebra was studied in \cite{Lg1} (under the name ``axiomatic $G_{1}$-vertex algebra'') and in \cite{L2}, and it was also independently studied in \cite{BK} (under the name ``field algebra'').
The theory of $\phi$-coordinated modules for quantum vertex algebras
was developed in \cite{L7}, whereas $\phi_{1}$-coordinated modules
were the main focus therein.

\begin{defi}\label{defnonlocalva}
{\em A {\em nonlocal vertex algebra} is a vector space $V$ equipped with a linear map
\begin{eqnarray*}
Y(\cdot,x):&&\ V\rightarrow \Hom (V,V((x)))\subset (\End V)[[x,x^{-1}]]\\
&& v\mapsto Y(v,x)=\sum_{\bZ}v_{n}x^{-n-1}\ \ (\mbox{where }v_{n}\in \End V)
\end{eqnarray*}
and with a distinguished vector ${\bf 1}\in V$, satisfying the conditions that
$$Y({\bf 1},x)=1,\ \ \ Y(v,x){\bf 1}\in V[[x]]\ \mbox{ and }
\ \lim_{x\rightarrow 0}Y(v,x){\bf 1}=v\  \mbox{ for }v\in V$$
and that for any $u,v,w\in V$, there exists a nonnegative integer $l$ such that
\begin{eqnarray}
(x_{0}+x_{2})^{l}Y(u,x_{0}+x_{2})Y(v,x_{2})w=(x_{0}+x_{2})^{l}Y(Y(u,x_{0})v,x_{2})w.
\end{eqnarray}
Furthermore, a {\em vertex algebra} is a nonlocal vertex algebra $V$ satisfying
the condition that for any $u,v\in V$,
there exists a nonnegative integer $k$ such that
\begin{eqnarray}
(x_{1}-x_{2})^{k}Y(u,x_{1})Y(v,x_{2})=(x_{1}-x_{2})^{k}Y(v,x_{2})Y(u,x_{1}).
\end{eqnarray}}
\end{defi}

For each nonlocal vertex algebra $V$, there is a canonical operator $\D$ on $V$, which is defined by
$$\D(v)=v_{-2}{\bf 1}=\left(\frac{d}{dx}Y(v,x){\bf 1}\right)|_{x=0}
\ \ \ \mbox{ for }v\in V.$$
This operator $\D$ satisfies the following property:
\begin{eqnarray}
[\D, Y(v,x)]=Y(\D v,x)=\frac{d}{dx}Y(v,x).
\end{eqnarray}

\begin{defi}\label{module}
{\em Let $V$ be a vertex algebra.  A $V$-{\em module} is a vector space $W$ equipped with a linear map
\begin{eqnarray*}
Y_W(\cdot, x) :&& V \to  \Hom(W,W((x))) \subset (\End W)[[x, x^{-1}]],\\
&&v\mapsto Y_W(v, x),
\end{eqnarray*}
satisfying the conditions that $Y_W(1, x) = 1_W$ (the identity operator on $W$) and that for
$u, v \in V,\ w \in W$, there exists $l\in\bN$ such that
\begin{eqnarray}
(x_0+x_2)^lY_W(u,x_0+x_2)Y_W(v,x_2)=(x_0+x_2)^lY_W(Y(u,x_0)v,x_2)w.
\end{eqnarray}}
\end{defi}

\begin{rem}{\rm It was shown in \cite{LTW} (Lemma 2.9) that the weak associativity axiom in
the definition of a $V$-module can be equivalently replaced by the condition that for any
$u, v \in V$, there exists $k \in \bN$ such that
\begin{eqnarray}
&&(x_1-x_2)^kY_W(u, x_1)Y_W(v, x_2)\in\Hom(W,W((x_1, x_2))),\\
&& x_0^kY_W(Y(u,x_0)v,x_2)=\left((x_1-x_2)^kY_W(u,x_1)Y_W(v,x_2)\right)|_{x_1=x_2+x_0}.
\end{eqnarray}}
\end{rem}

\begin{defi}\label{dcq}
{\em Let $V$ be a nonlocal vertex algebra and let $\phi$ be an associate of the one-dimensional additive formal group $F_{\rm a}$.  A {\em $\phi$-coordinated $V$-module} is a vector space $W$ equipped with a linear map $Y_W(\cdot, x)$ as in Definition \ref{module},
satisfying the conditions that $Y_W(1, x) = 1_W$ and that for $u, v
\in V$, there exists $k \in \bN$ such that
\begin{eqnarray}
&&(x_1-x_2)^kY_W(u,x_1)Y_W(v,x_2)\in\Hom(W,W((x_1,x_2))),\\
&& (\phi(x_2,x_0)-x_2)^kY_W(Y(u,x_0)v,x_2)\nonumber\\
&=&((x_1-x_2)^kY_W(u,x_1)Y_W(v,x_2))|_{x_1=\phi(x_2,x_0)}.
\end{eqnarray}}
\end{defi}

It is clear that the notion of {$\phi_{0}$-coordinated $V$-module}
 is equivalent to that of $V$-module.

\section{Jacobi-type identity for $\phi_\ep$-coordinated modules}

In this section, we shall present some axiomatic results on $\phi_\ep$-coordinated modules for vertex algebras.
In particular, we establish a Jacobi-type identity and a commutator formula.

Let $W$ be a vector space. Set
\begin{eqnarray}
\E(W)=\Hom (W,W((x)))\subset (\End W)[[x,x^{-1}]].
\end{eqnarray}
A subset $U$ of $\E(W)$ is said to be {\em local} if for any $a(x),b(x)\in U$ there exists a nonnegative integer $k$  such that
\begin{eqnarray}\label{elocality-def}
(x_1-x_2)^ka(x_1)b(x_2)=(x_1-x_2)^kb(x_2)a(x_1).
\end{eqnarray}
A pair $(a(x), b(x))$ in $\E(W)$ is said
to be {\em compatible} if there exists $k \in \bN$ such that
\begin{eqnarray}\label{ecompatible-condition}
(x_1-x_2)^ka(x_1)b(x_2)\in \Hom (W, W((x_1,x_2))).
\end{eqnarray}
Note that (\ref{elocality-def}) implies (\ref{ecompatible-condition}).
Thus each pair in a local subset is always compatible.

Fix an integer $\ep$ throughout this section.
Let $(a(x), b(x))$ be any compatible pair in $\E(W)$ with $k \in
\bN$ such that (\ref{ecompatible-condition}) holds. We define $a(x)_{n}^{\ep}b(x)\in \E(W)$ for $n\in \bZ$
in terms of generating function
$$Y_{\E}^{\ep} (a(x),z)b(x)=\sum_{n\in \bZ}a(x)_{n}^{\ep}b(x)z^{-n-1}$$
by
\begin{eqnarray}
Y_{\E}^{\ep} (a(x),z)b(x)=(\phi_\ep(x,z)-x)^{-k}\left((x_1-x)^ka(x_1)b(x)\right)|_{x_1=\phi_\ep(x,z)},
\end{eqnarray}
where $(\phi_\ep(x,z)-x)^{-k}$ is viewed as an element of $\bC((x))((z))$
(recalling Lemma \ref{esimple-fact}).

\begin{rem} {\em A notion of compatible subset of $\E(W)$ was introduced in \cite{Lg1} and it was proved therein that each compatible subset generates a nonlocal vertex algebra in a certain canonical way.
It was proved in \cite{L6} (Theorem 4.10) that any compatible subset $U$ of $\E(W)$ generates a nonlocal vertex algebra $\lan U\ran_{\phi_{\ep}}$ with $W$ as a canonical $\phi_{\ep}$-coordinated module.
On the other hand, it was proved in \cite{Lg1} that every local subset of $\E(W)$ is compatible.
Thus each local subset $U$ of $\E(W)$ generates a nonlocal vertex algebra $\lan
U\ran_{\phi_{\ep}}$.
It was proved in \cite{L6} that $\lan U\ran_{\phi_{1}}$ is a vertex
algebra, whereas it was proved in \cite{L1} that $\lan U\ran_{\phi_{0}}$ is a vertex algebra.}
\end{rem}

In the following, we shall prove that for every integer $\ep$, $\lan U\ran_{\phi_{\ep}}$ is a
vertex algebra, generalizing the corresponding results of \cite{L1, L6}.

\begin{pro}\label{P}
Let $W$ be a vector space and let $V$ be a local subspace of $\E(W)$,
which is $Y^{\ep}_{\E}$-closed in the sense that
$$u(x)_{n}^{\ep}v(x)\in V\ \mbox{ for }u(x),v(x)\in V,\ n\in \bZ.$$
Let $a(x), b(x)\in V$. Suppose
\begin{eqnarray}\label{T}
(x_1-x_2)^ka(x_1)b(x_2)=(x_1-x_2)^kb(x_2)a(x_1)
\end{eqnarray}
for some nonnegative integer $k$.
Then
\begin{eqnarray}
(x_1-x_2)^kY_{\E}^{\ep}(a(x),x_1)Y_{\E}^{\ep}(b(x),x_2)
=(x_1-x_2)^kY_{\E}^{\ep}(b(x),x_2)Y_{\E}^{\ep}(a(x),x_1).
\end{eqnarray}
\end{pro}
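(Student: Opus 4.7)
The plan is to establish the claimed locality by evaluating both sides on an arbitrary $c(x)\in V$ and expressing each iterated composition as a single substitution into a three-variable expression involving $a(y_{1})b(y_{2})c(x)$. Since $V$ is local, and a local subset of $\E(W)$ is automatically compatible, one can choose nonnegative integers $p,q\ge k$ such that
$$(y_{1}-x)^{p}(y_{2}-x)^{q}(y_{1}-y_{2})^{k}a(y_{1})b(y_{2})c(x)\in \Hom(W,W((x,y_{1},y_{2}))),$$
and similarly with $a(y_{1})$ and $b(y_{2})$ swapped. The hypothesis $(y_{1}-y_{2})^{k}a(y_{1})b(y_{2})=(y_{1}-y_{2})^{k}b(y_{2})a(y_{1})$ ties the two three-variable expressions together.

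The first main step is to prove that
$$Y_{\E}^{\ep}(a(x),x_{1})Y_{\E}^{\ep}(b(x),x_{2})c(x) = \bigl[(\phi_{\ep}(x,x_{1})-x)^{-p}(\phi_{\ep}(x,x_{2})-x)^{-q}F(x,y_{1},y_{2})\bigr]\Big|_{y_{i}=\phi_{\ep}(x,x_{i})},$$
where $F(x,y_{1},y_{2})=(y_{1}-x)^{p}(y_{2}-x)^{q}a(y_{1})b(y_{2})c(x)$, together with the analogous identity for the swapped composition obtained by replacing $a(y_{1})b(y_{2})$ with $b(y_{2})a(y_{1})$. This is done by first applying the definition of $Y_{\E}^{\ep}(b(x),x_{2})c(x)$, which effects the substitution $y_{2}\mapsto\phi_{\ep}(x,x_{2})$, and then applying $Y_{\E}^{\ep}(a(x),x_{1})$ coefficient-wise in $x_{2}$ to the result, which effects the substitution $y_{1}\mapsto\phi_{\ep}(x,x_{1})$. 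The $Y_{\E}^{\ep}$-closedness of $V$ guarantees that the intermediate object $Y_{\E}^{\ep}(b(x),x_{2})c(x)$ lies in a space where $Y_{\E}^{\ep}(a(x),\cdot)$ can be applied.

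The second main step is to multiply both representations by $(x_{1}-x_{2})^{k}$ and invoke Lemma \ref{esimple-fact}. That lemma provides a unit $g(x,x_{1},x_{2})\in\bC((x))[[x_{1},x_{2}]]$ satisfying $\phi_{\ep}(x,x_{1})-\phi_{\ep}(x,x_{2})=(x_{1}-x_{2})g(x,x_{1},x_{2})$, so after substitution $(y_{1}-y_{2})^{k}$ becomes $(x_{1}-x_{2})^{k}g(x,x_{1},x_{2})^{k}$. Multiplying $F$ by $(y_{1}-y_{2})^{k}$, the locality of $a(y_{1}),b(y_{2})$ swaps their order inside $F$, and then dividing out by the unit $g^{k}$ (which is invertible in the relevant ring of formal series) yields the desired identity after substitution.

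The main obstacle is the first step: rigorously justifying that the iterated $Y_{\E}^{\ep}$ composition equals iterated substitution into $a(y_{1})b(y_{2})c(x)$. One must carefully track the order in which the formal variables $x_{1},x_{2}$ are introduced versus the substitutions $y_{i}\mapsto\phi_{\ep}(x,x_{i})$, and verify that the negative-power expansions of $(\phi_{\ep}(x,x_{i})-x)^{-1}$ in $x_{i}$ are compatible with the expansions of $(y_{1}-y_{2})^{-k}$ and similar factors that appear in the intermediate computation. This is a careful but essentially routine formal-variable bookkeeping exercise, parallel to the $\phi_{1}$ case treated in \cite{L6}.
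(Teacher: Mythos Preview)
Your overall strategy---reduce the iterated $Y_{\E}^{\ep}$ composition to a substitution into a three-variable expression, then use locality of $a,b$ to swap---is the same as the paper's. However, your ``first main step'' formula
\[
Y_{\E}^{\ep}(a(x),x_{1})Y_{\E}^{\ep}(b(x),x_{2})c(x) = (\phi_{\ep}(x,x_{1})-x)^{-p}(\phi_{\ep}(x,x_{2})-x)^{-q}\,F(x,y_{1},y_{2})\big|_{y_{i}=\phi_{\ep}(x,x_{i})}
\]
with $F=(y_{1}-x)^{p}(y_{2}-x)^{q}a(y_{1})b(y_{2})c(x)$ is not valid as written, and this is a genuine gap rather than routine bookkeeping. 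The problem is that $F$ lies only in $\Hom(W,W((x,y_{2}))((y_{1})))$, not in $\Hom(W,W((x,y_{1},y_{2})))$, so the simultaneous substitution $y_{i}\mapsto\phi_{\ep}(x,x_{i})$ is undefined. Your proposed iterated interpretation does not rescue it: after substituting $y_{2}\mapsto\phi_{\ep}(x,x_{2})$, the $x_{2}$-coefficients of the result are various $b(x)_{n}^{\ep}c(x)\in V$, and the compatibility exponent needed to apply $Y_{\E}^{\ep}(a(x),x_{1})$ to each of these depends on $n$ (it grows without bound as $n\to -\infty$, by Dong's lemma). So no single $p$ works uniformly, and you cannot write the second application as a substitution into $(y_{1}-x)^{p}a(y_{1})(\cdots)$.

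The fix---which is exactly what the paper does through Lemma~4.7 of \cite{L6}---is to build the factor $(y_{1}-y_{2})^{k}$ (or $(y_{1}-y_{2})^{l}$ with $l\ge k$) into $F$ from the start. With that factor present, pairwise locality forces the three-variable expression into $\Hom(W,W((x,y_{1},y_{2})))$, the substitution is legitimate, and one obtains
\[
(\phi_{1}-\phi_{2})^{k}(\phi_{1}-x)^{p}(\phi_{2}-x)^{q}\,Y_{\E}^{\ep}(a,x_{1})Y_{\E}^{\ep}(b,x_{2})c
=\big[(y_{1}-y_{2})^{k}(y_{1}-x)^{p}(y_{2}-x)^{q}a(y_{1})b(y_{2})c(x)\big]\big|_{y_{i}=\phi_{i}}.
\]
After swapping $a,b$ inside and cancelling the units $(\phi_{i}-x)^{\bullet}$ and $g^{k}$ from Lemma~\ref{esimple-fact}, this yields the order-$k$ locality directly. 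The paper instead uses a common exponent $l$ for all three factors, obtains locality only at order $l+k$, and then invokes the Jacobi identity plus weak associativity to descend to order $k$; using $(y_{1}-y_{2})^{k}$ rather than $(y_{1}-y_{2})^{l}$ would make that second stage unnecessary. So your outline, once corrected by inserting the missing $(y_{1}-y_{2})^{k}$ factor, is in fact slightly more efficient than the paper's argument---but as written it does not go through.
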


\begin{proof}
Let $c(x)\in V$ be arbitrarily fixed. There exists $l\in \bN$ with $l\ge k$ such that
\begin{eqnarray*}
(z-x)^la(z)c(x)=(z-x)^{l}c(x)a(z),\ \ \ \ (z-x)^lb(z)c(x)=(z-x)^{l}c(x)b(z).
\end{eqnarray*}
Using this and (\ref{T}) we get
\begin{eqnarray*}
(y-z)^l(y-x)^l(z-x)^l a(y)b(z)c(x)\in\Hom(W,W((x,y,z))).
\end{eqnarray*}
By Lemma 4.7 in \cite{L6}, we have
\begin{eqnarray*}
&&(\phi_\ep(x,x_1)-\phi_\ep(x,x_2))^l(\phi_\ep(x,x_1)-x)^l(\phi_\ep(x,x_2)-x)^lY_\E^{\ep}(a(x),x_1)Y_\E^{\ep}(b(x),x_2)c(x)\\
&&=(y-z)^l(y-x)^l(z-x)^l a(y)b(z)c(x)|_{y=\phi_\ep(x,x_1), z=\phi_\ep(x,x_2)}.
\end{eqnarray*}
Set
\begin{eqnarray*}
&&f(x,x_1,x_2)=(\phi_\ep(x,x_1)-\phi_\ep(x,x_2))^l(\phi_\ep(x,x_1)-x)^l(\phi_\ep(x,x_2)-x)^l,
\end{eqnarray*}
which lies in $\bC((x))((x_{1},x_{2}))$.
Then
\begin{eqnarray*}
&&f(x,x_1,x_2)({\phi_\ep(x,x_1)}-{\phi_\ep(x,x_2)})^kY_\E^{\ep}(a(x),x_1)Y_\E^{\ep}(b(x),x_2)c(x)\\
&=&(y-z)^l(y-x)^l(z-x)^l(y-z)^k a(y)b(z)c(x)|_{y=\phi_\ep(x,x_1), z=\phi_\ep(x,x_2)}\\
&=&(y-z)^l(y-x)^l(z-x)^l(y-z)^kb(z)a(y)c(x)|_{z=\phi_\ep(x,x_2),y=\phi_\ep(x,x_1)}\\
&=&f(x,x_1,x_2)({\phi_\ep(x,x_1)}-{\phi_\ep(x,x_2)})^kY_\E^{\ep}(b(x),x_2)Y_\E^{\ep}(a(x),x_1)c(x).
\end{eqnarray*}
Noticing that $(\phi_\ep(x,x_1)-x)^l(\phi_\ep(x,x_2)-x)^l$ is invertible in $\bC((x))((x_{1},x_{2}))$, by cancellation, we get
\begin{eqnarray}
&&(\phi_\ep(x,x_1)-\phi_\ep(x,x_2))^{l+k}Y_\E^{\ep}(a(x),x_1)Y_\E^{\ep}(b(x),x_2)c(x)\nonumber\\
&&=(\phi_\ep(x,x_1)-\phi_\ep(x,x_2))^{l+k}Y_\E^{\ep}(b(x),x_2)Y_\E^{\ep}(a(x),x_1)c(x).
\end{eqnarray}
By Lemma \ref{esimple-fact}, we have
\begin{eqnarray*}
(\phi_\ep(x,x_1)-\phi_\ep(x,x_2))^{l+k}=(x_1-x_2)^{l+k}g(x,x_1,x_2)^{l+k},
\end{eqnarray*}
where $g(x,x_1,x_2)$ is a unit in $\bC((x))[[x_1, x_2]]$. By cancellation we get
\begin{eqnarray}
(x_1-x_2)^{l+k}Y_{\E}^{\ep}(a(x),x_1)Y_{\E}^{\ep}(b(x),x_2)
=(x_1-x_2)^{l+k}Y_{\E}^{\ep}(b(x),x_2)Y_{\E}^{\ep}(a(x),x_1).
\end{eqnarray}
Combining this with the weak associativity obtained in \cite{L6}  we get
\begin{eqnarray}\label{JJ}
&&x_0^{-1}\de\left(\frac{x_1-x_2}{x_0}\right)Y_{\E}^{\ep}(a(x),x_1)Y_{\E}^{\ep}(b(x),x_2)
-x_0^{-1}\de\left(\frac{x_2-x_1}{-x_0}\right)Y_{\E}^{\ep}(b(x),x_2)Y_{\E}^{\ep}(a(x),x_1)\nonumber\\
&&=x_2^{-1}\de\left(\frac{x_1-x_0}{x_2}\right)Y_{\E}^{\ep}(Y_{\E}^{\ep}(a(x),x_0)b(x),x_2).
\end{eqnarray}
From (\ref{T}) we have
$$
(x_1-x_2)^ka(x_1)b(x_2)\in \Hom (W, W((x_1,x_2))),$$
so that
$$
({\phi_\ep(x_2,x_0)}-x_2)^kY_{\E}^{\ep}(a(x),x_0)b(x)
=(x_1-x)^ka(x_1)b(x)|_{x_1=\phi_\ep(x_2,x_0)}.
$$
Multiplying both sides of (\ref{JJ})
by $({\phi_\ep(x_2,x_0)}-x_2)^k$ and then taking $\Res_{x_0}$ we get
\begin{eqnarray*}
&&({\phi_\ep(x_2,x_0)}-x_2)^kY_{\E}^{\ep}(a(x),x_1)Y_{\E}^{\ep}(b(x),x_2)\\
&=&({\phi_\ep(x_2,x_0)}-x_2)^kY_{\E}^{\ep}(b(x),x_2)Y_{\E}^{\ep}(a(x),x_1).
\end{eqnarray*}
By Lemma \ref{esimple-fact}, we have
$$({\phi_\ep(x_2,x_0)}-x_2)^k=x_0^kh(x_2,x_0)^{k}$$
where $h(x_2,x_0)$ is a unit in $\bC((x_2))[[x_0]]$. By cancellation we obtain
\begin{eqnarray*}
(x_1-x_2)^kY_{\E}^{\ep}(a(x),x_1)Y_{\E}^{\ep}(b(x),x_2)
=(x_1-x_2)^kY_{\E}^{\ep}(b(x),x_2)Y_{\E}^{\ep}(a(x),x_1),
\end{eqnarray*}
as desired.
\end{proof}

Now we have:

\begin{theo}\label{GT}
Let $W$ be a vector space and let $U$ be any local subset
of $\E(W)$. Then  $\lan U\ran_{\phi_{\ep}}$ is a vertex algebra and $W$ is a canonical $\phi_\ep$-coordinated $\lan U\ran_{\phi_{\ep}}$-module.
\end{theo}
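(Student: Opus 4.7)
The aim is to upgrade the nonlocal vertex algebra $\lan U\ran_{\phi_\ep}$, constructed in \cite{L6} from any compatible subset of $\E(W)$ with $W$ as a canonical $\phi_\ep$-coordinated module, into an honest vertex algebra when $U$ is in addition local. Since the vertex operator map on $\lan U\ran_{\phi_\ep}$ is just $Y_\E^\ep$ restricted to this closure, Proposition \ref{P} already supplies weak commutativity for the operators $Y_\E^\ep(a,x_1), Y_\E^\ep(b,x_2)$ acting on $V=\lan U\ran_{\phi_\ep}$ \emph{provided} $V$ is itself a local subspace of $\E(W)$. Because $\lan U\ran_{\phi_\ep}$ is $Y_\E^\ep$-closed by construction, the whole theorem reduces to showing that the $Y_\E^\ep$-closure of a local subset of $\E(W)$ remains local.

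To establish this closure property, I would prove a Dong-type lemma for $Y_\E^\ep$: if $a(x), b(x), c(x)\in\E(W)$ are pairwise local, then $a(x)_n^\ep b(x)$ is local with $c(x)$ for every $n\in\bZ$. The argument mirrors the three-variable computation driving Proposition \ref{P}. Choose $l$ large so that
\[
(y-z)^l(y-x)^l(z-x)^l\, a(y)b(z)c(x)\in \Hom(W,W((x,y,z))),
\]
then substitute $y=\phi_\ep(x,x_1)$ and $z=\phi_\ep(x,x_2)$ using Lemma 4.7 of \cite{L6}, factor the resulting prefactors through Lemma \ref{esimple-fact}, and cancel units to obtain both the weak-commutativity identity and its Jacobi-type companion (\ref{JJ}) already produced en route to Proposition \ref{P}. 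Taking $\Res_{x_0}$ in (\ref{JJ}) yields a commutator formula that expresses $[Y_\E^\ep(a,x_1),Y_\E^\ep(b,x_2)]c(x)$ as a finite sum of singular-delta-function terms involving $Y_\E^\ep(a_n^\ep b,x_2)c(x)$, from which coefficient-wise locality of $a_n^\ep b$ with $c$ can be read off.

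Equipped with this Dong-type lemma, I would then run the standard induction on the number of nested $Y_\E^\ep$-operations used to generate an element of $\lan U\ran_{\phi_\ep}$ starting from the local base $U\cup\{1_W\}$: each new iterate inherits locality with every previously built element, so the full closure $\lan U\ran_{\phi_\ep}$ is local. Applying Proposition \ref{P} now delivers weak commutativity of the vertex operator map, certifying that $\lan U\ran_{\phi_\ep}$ is a vertex algebra; the fact that $W$ is a $\phi_\ep$-coordinated module over it is then inherited verbatim from \cite{L6}.

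The principal obstacle is the Dong-type lemma. The substitutions $y=\phi_\ep(x,x_1)$, $z=\phi_\ep(x,x_2)$ must be handled with care so that prefactors like $(\phi_\ep(x,x_1)-x)^{-l}(\phi_\ep(x,x_2)-x)^{-l}$ are legitimate elements of $\bC((x))((x_1,x_2))$, and the extraction of coefficient-level locality in $\E(W)$ from the operator Jacobi identity requires keeping careful track of delta-function identities acted on the third local element $c(x)$. Once this lemma is in place, the rest of the proof is essentially formal bookkeeping.
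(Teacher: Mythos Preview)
Your strategy is essentially the paper's: both rest on the locality of $\lan U\ran_{\phi_\ep}$ inside $\E(W)$ together with Proposition~\ref{P}. The difference is that the paper does \emph{not} treat the locality of the closure as an obstacle at all---it simply quotes \cite{L6}, where $\lan U\ran_{\phi_\ep}$ is by construction ``the smallest $Y^\ep_\E$-closed \emph{local} subspace containing $U$ and $1_W$.'' So the Dong-type lemma you propose to prove is already part of the input from \cite{L6}. Once that is granted, your final step (apply Proposition~\ref{P} to \emph{every} pair in $\lan U\ran_{\phi_\ep}$, immediately yielding weak commutativity of the vertex-operator map) is in fact a bit more direct than the paper's, which applies Proposition~\ref{P} only to pairs from $U$ and then invokes the unstated standard fact that a nonlocal vertex algebra generated by a set whose vertex operators are mutually local is a vertex algebra.

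That said, your sketched proof of the Dong-type lemma has a genuine confusion. The identity (\ref{JJ}) and the commutator formula you extract from it live at the wrong level: they describe
\[
[Y_\E^\ep(a(x),x_1),\,Y_\E^\ep(b(x),x_2)]\,c(x)
\]
as operators on the element $c(x)$ of the closure, expressed through $Y_\E^\ep(a_j^\ep b,x_2)c(x)$. This says nothing about locality of $(a_n^\ep b)(y)$ with $c(z)$ as elements of $\E(W)$, which is the content of the Dong lemma; that requires controlling $(a_n^\ep b)(x_1)c(x_2)$ versus $c(x_2)(a_n^\ep b)(x_1)$ with the $c$-variable kept \emph{separate}, whereas your substitution $y=\phi_\ep(x,x_1),\ z=\phi_\ep(x,x_2)$ ties both the $a$- and $b$-variables to the base variable $x$ of $c$. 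A correct direct proof (as in \cite{L6}) instead works with a three-variable expression such as $(x_1-x_2)^l(x_1-x_3)^l(x_2-x_3)^l a(x_1)b(x_2)c(x_3)$, substitutes only $x_1=\phi_\ep(x_2,z)$ to form $Y_\E^\ep(a,z)b$, and keeps $x_3$ free to compare with $c(x_3)$. In short: your plan is sound, but the obstacle you highlight is already handled in \cite{L6}, and the route you outline to re-prove it would not work as written.
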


\begin{proof} We already knew that $\lan U\ran_{\phi_{\ep}}$ is a nonlocal vertex algebra and
 $W$ is a $\phi_\ep$-coordinated $\lan U\ran_{\phi_{\ep}}$-module
with $Y_{W}(\alpha(x),z)=\alpha(z)$ for $\alpha(x)\in \lan U\ran_{\phi_{\ep}}$.
As $\lan U\ran_{\phi_{\ep}}$ is the smallest $Y^\ep_\E$-closed local subspace containing $U$ and $1_W$, we see that
$\lan U\ran_{\phi_{\ep}}$ as a nonlocal vertex algebra is generated by $U$. Given that $U$ is local,
by Proposition \ref{P} we see that
$$
\{Y^\ep_\E(a(x), z)\mid a(x) \in U\}
$$
is a local subset of $\E(\lan U\ran_{\phi_{\ep}})$.
It follows that $\lan U\ran_{\phi_{\ep}}$ is a vertex algebra and
$W$ is a $\phi_\ep$-coordinated $\lan U\ran_{\phi_{\ep}}$-module.
\end{proof}

We also have the following results:

\begin{pro}\label{P2}
Let $V$ be a vertex algebra and let $(W, Y_W)$ be a $\phi_\ep$-coordinated $V$-module.
Suppose that for some fixed $u,v\in V,\ k\in \bN$,
$$
(x_1-x_2)^kY_W(u, x_1)Y_W(v, x_2)\in\Hom(W,W((x_1, x_2))).
$$
Then
\begin{eqnarray}\label{elocal-prop}
(x_1-x_2)^kY_W(u,x_1)Y_W(v,x_2)=(x_1-x_2)^kY_W(v,x_2)Y_W(u,x_1).
\end{eqnarray}
\end{pro}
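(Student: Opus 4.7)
The plan is to combine the weak locality of the vertex algebra $V$ (via skew-symmetry) with the $\phi_\ep$-coordinated weak associativity on $W$, exploiting the associate identity $\phi_\ep(\phi_\ep(x_2, x_0), -x_0) = x_2$ to swap the two orderings of $Y_W(u, \cdot)$ and $Y_W(v, \cdot)$.

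First, I would enlarge $k$ to an integer $l \ge k$ so that both the compatibility and the $\phi_\ep$-coordinated weak associativity of Definition \ref{dcq} hold for the two orderings $(u, v)$ and $(v, u)$ with exponent $l$, and so that weak locality $(z_1 - z_2)^l Y(u, z_1) Y(v, z_2) = (z_1 - z_2)^l Y(v, z_2) Y(u, z_1)$ holds in $V$. The latter gives the standard skew-symmetry $Y(u, x_0) v = e^{x_0 \D} Y(v, -x_0) u$. Specializing the $\phi_\ep$-coordinated weak associativity to the case $v = {\bf 1}$ (and using $Y(u, x_0){\bf 1} = e^{x_0 \D} u$) yields the conjugation formula $Y_W(e^{x_0 \D} w, x_2) = Y_W(w, \phi_\ep(x_2, x_0))$ for $w \in V$, extended by linearity to $x_0$-dependent $w$. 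Applied to $w = Y(v, -x_0) u$ and combined with skew-symmetry, this produces the key identity
$$Y_W(Y(u, x_0) v, x_2) = Y_W(Y(v, -x_0) u, \phi_\ep(x_2, x_0)).$$

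Next, multiply both sides by $(\phi_\ep(x_2, x_0) - x_2)^l$. By weak associativity for $(u, v)$, the LHS becomes $((x_1 - x_2)^l Y_W(u, x_1) Y_W(v, x_2))|_{x_1 = \phi_\ep(x_2, x_0)}$. For the RHS, apply weak associativity for $(v, u)$ with the substitution $(z_2, z_0) = (\phi_\ep(x_2, x_0), -x_0)$; by the associate identity the resulting substitution point $z_1 = \phi_\ep(\phi_\ep(x_2, x_0), -x_0)$ collapses to $x_2$, giving (up to an overall $(-1)^l$) $((x_1 - x_2)^l Y_W(v, x_2) Y_W(u, x_1))|_{x_1 = \phi_\ep(x_2, x_0)}$. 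By the invertibility of the substitution $x_1 \leftrightarrow \phi_\ep(x_2, x_0)$, guaranteed by Lemma \ref{esimple-fact}, this lifts to $(x_1 - x_2)^l Y_W(u, x_1) Y_W(v, x_2) = (x_1 - x_2)^l Y_W(v, x_2) Y_W(u, x_1)$. To reduce the exponent from $l$ to $k$, I cancel $(x_1 - x_2)^{l - k}$ in the ring $\Hom(W, W((x_2))((x_1)))$, where $(x_1 - x_2)$ is not a zero divisor: the hypothesis places $(x_1 - x_2)^k Y_W(u, x_1) Y_W(v, x_2)$ in $\Hom(W, W((x_1, x_2))) \subset \Hom(W, W((x_2))((x_1)))$, while $(x_1 - x_2)^k Y_W(v, x_2) Y_W(u, x_1)$ lives in $\Hom(W, W((x_2))((x_1)))$ by the natural order of operators, making the cancellation legal.

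The main technical obstacle is the careful formal-series interpretation of the conjugation formula $Y_W(e^{x_0 \D} w, x_2) = Y_W(w, \phi_\ep(x_2, x_0))$ (particularly for $x_0$-dependent $w$), and the iterated substitution governed by the associate axiom, along the same lines as in the proof of Proposition \ref{P}. The final cancellation step also requires attention to the correct formal-series space. Once these technical issues are in place, the argument reduces to a bookkeeping combination of the skew-symmetry and weak associativity identities.
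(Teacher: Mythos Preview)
Your proposal is correct and follows essentially the same route as the paper's proof: combine skew-symmetry $Y(u,x_0)v=e^{x_0\D}Y(v,-x_0)u$ with the $\phi_\ep$-translation formula $Y_W(e^{x_0\D}w,x_2)=Y_W(w,\phi_\ep(x_2,x_0))$ (the paper cites this from \cite{L6}, you derive it by specializing weak associativity at $v={\bf 1}$), match the two orderings after the substitution $x_1=\phi_\ep(x_2,x_0)$ via the associate identity, lift back, and then cancel the surplus power of $(x_1-x_2)$ inside $\Hom(W,W((x_2))((x_1)))$. The only cosmetic difference is that the paper multiplies by an extra $(x_1-x_2)^k$ before inverting $(x_1-x_2)^l$, whereas you cancel $(x_1-x_2)^{l-k}$ directly; both are the same step since $(x_1-x_2)$ is a unit in $\bC((x_2))((x_1))$.
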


\begin{proof}
Recall the skew-symmetry of  $V$:
\begin{eqnarray*}
Y(u,x)v=e^{x\D}Y(v,-x)u\ \ \ \mbox{ for }u,v\in V.
\end{eqnarray*}
From the definition, there exists  $l\in \bN$ such that
$$
(x_1-x_2)^lY_W(u, x_1)Y_W(v, x_2),\quad (x_1-x_2)^lY_W(v, x_2)Y_W(u, x_1)\in\Hom(W,W((x_1, x_2))).
$$
Then, using Lemmas 3.6 and 3.7 in \cite{L6} we get
\begin{eqnarray*}
&&((x_1-x_2)^lY_W(u, x_1)Y_W(v, x_2))|_{x_1=\phi_\ep(x_2,x_0)}\\
&=&(\phi_\ep(x_2,x_0)-x_2)^lY_W(Y(u, x_0)v, x_2)\\
&=&(\phi_\ep(x_2,x_0)-x_2)^lY_W(e^{x_0\D}Y(v, -x_0)u, x_2)\\
&=&(\phi_\ep(x_2,x_0)-x_2)^lY_W(Y(v, -x_0)u, \phi_\ep(x_2,x_0)).
\end{eqnarray*}
On the other hand, we have
\begin{eqnarray*}
&&((x_1-x_2)^lY_W(v, x_2)Y_W(u, x_1))|_{x_2=\phi_\ep(x_1,-x_0)}=(x_1 -\phi_\ep(x_1,-x_0))^lY_W(Y(v, -x_0)u, x_1).
\end{eqnarray*}
Hence
\begin{eqnarray*}
&&((x_1-x_2)^lY_W(u, x_1)Y_W(v, x_2))|_{x_1=\phi_\ep(x_2,x_0)}\\
&=&(((x_1-x_2)^lY_W(v, x_2)Y_W(u, x_1))|_{x_2=\phi_\ep(x_1,-x_0)})|_{x_1=\phi_\ep(x_2,x_0)}\\
&=&((x_1-x_2)^lY_W(v, x_2)Y_W(u, x_1))|_{x_1=\phi_\ep(x_2,x_0)}.
\end{eqnarray*}
It follows that
\begin{eqnarray*}
(x_1-x_2)^lY_W(u, x_1)Y_W(v, x_2)=(x_1-x_2)^lY_W(v, x_2)Y_W(u, x_1),
\end{eqnarray*}
which implies
\begin{eqnarray*}
(x_1-x_2)^l(x_1-x_2)^kY_W(u, x_1)Y_W(v, x_2)=(x_1-x_2)^l(x_1-x_2)^kY_W(v, x_2)Y_W(u, x_1).
\end{eqnarray*}
Noticing that $(x_1-x_2)^kY_W(v, x_2)Y_W(u, x_1)\in \Hom (W,W((x_{2}))((x_{1})))$  and
$$
(x_1-x_2)^kY_W(u, x_1)Y_W(v, x_2)\in\Hom(W,W((x_1, x_2)))\subset \Hom(W,W((x_{2}))((x_{1})))
$$
by assumption, we can multiply both sides by the inverse of $(x_1-x_2)^l$ in $\bC((x_2))((x_1))$ to obtain the desired relation (\ref{elocal-prop}).
\end{proof}

\begin{pro}\label{P4}
Let $V$ be a vertex algebra and let $(W, Y_W)$
be a $\phi_\ep$-coordinated  $V$-module. Set $V_W = \{Y_W(v, x) | v \in V \}$. Then  $V_W$ is a
local subspace of $\E(W)$, $(V_W, Y_{\E}^{\ep} , 1_W)$ is a vertex algebra,
and $Y_W$ is a homomorphism of  vertex algebras.
\end{pro}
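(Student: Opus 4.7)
The plan is to proceed in three steps: verify that $V_W$ is a local subspace of $\E(W)$, establish a key intertwining identity linking $Y_\E^{\ep}$ on $V_W$ with $Y$ on $V$, and then invoke Theorem \ref{GT} to conclude.

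\emph{Step 1: $V_W$ is local and contains $1_W$.} Since $Y_W({\bf 1},x)=1_W$ by the module axioms, we have $1_W\in V_W$. For any $u,v\in V$, the $\phi_\ep$-coordinated module axiom (Definition \ref{dcq}) supplies a nonnegative integer $k$ such that
\begin{equation*}
(x_1-x_2)^kY_W(u,x_1)Y_W(v,x_2)\in\Hom(W,W((x_1,x_2))),
\end{equation*}
and then Proposition \ref{P2} promotes this compatibility to the locality relation
\begin{equation*}
(x_1-x_2)^kY_W(u,x_1)Y_W(v,x_2)=(x_1-x_2)^kY_W(v,x_2)Y_W(u,x_1).
\end{equation*}

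\emph{Step 2: The intertwining identity.} I would prove that for all $u,v\in V$,
\begin{equation*}
Y_\E^{\ep}(Y_W(u,x),x_0)\,Y_W(v,x)=Y_W(Y(u,x_0)v,x).
\end{equation*}
Starting from the very definition of $Y_\E^{\ep}$,
\begin{equation*}
Y_\E^{\ep}(Y_W(u,x),x_0)Y_W(v,x)=(\phi_\ep(x,x_0)-x)^{-k}\bigl((x_1-x)^kY_W(u,x_1)Y_W(v,x)\bigr)\big|_{x_1=\phi_\ep(x,x_0)},
\end{equation*}
the $\phi_\ep$-coordinated module axiom rewrites the bracketed expression as $(\phi_\ep(x,x_0)-x)^kY_W(Y(u,x_0)v,x)$, and cancellation of $(\phi_\ep(x,x_0)-x)^k$ delivers the identity; the cancellation is legitimate because Lemma \ref{esimple-fact} guarantees this factor is a unit in the relevant domain. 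Extracting coefficients in $x_0$ gives $Y_W(u,x)_n^{\ep}Y_W(v,x)=Y_W(u_nv,x)\in V_W$ for every $n\in\bZ$, so $V_W$ is $Y_\E^{\ep}$-closed.

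\emph{Step 3: Conclusion.} Being local, $Y_\E^{\ep}$-closed, and containing $1_W$, the subspace $V_W$ coincides with $\lan V_W\ran_{\phi_\ep}$, which by Theorem \ref{GT} is a vertex algebra with vacuum $1_W$. The intertwining identity of Step 2, combined with $Y_W({\bf 1})=1_W$, is precisely the statement that $Y_W:V\to V_W$ preserves vacuum and intertwines vertex operators, hence is a homomorphism of vertex algebras. The only step that demands any care is the cancellation in Step 2, but Lemma \ref{esimple-fact} makes this routine; the main conceptual input is Proposition \ref{P2}, which converts the one-sided compatibility in the definition of a $\phi_\ep$-coordinated module into the two-sided locality needed to apply Theorem \ref{GT}.
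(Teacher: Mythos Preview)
Your proof is correct and follows essentially the same route as the paper's: both establish locality of $V_W$ via Proposition \ref{P2}, derive the intertwining identity $Y_\E^\ep(Y_W(u,x),x_0)Y_W(v,x)=Y_W(Y(u,x_0)v,x)$ by comparing the $\phi_\ep$-coordinated module axiom with the definition of $Y_\E^\ep$ and cancelling the unit $(\phi_\ep(x,x_0)-x)^k$ via Lemma \ref{esimple-fact}, and then conclude that $V_W$ is a vertex algebra with $Y_W$ a homomorphism. Your Step 3 is slightly more explicit in invoking Theorem \ref{GT} through the identification $V_W=\langle V_W\rangle_{\phi_\ep}$, whereas the paper simply asserts the conclusion once the intertwining identity is in hand.
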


\begin{proof}
By Definition \ref{dcq} and Proposition \ref{P2},  $V_W$ is a local subspace of $\E(W)$. Let $u, v \in V$.
By definition, there exists $k\in\bN$ such that
$$(x_1-x)^kY_W(u, x_1)Y_W(v, x)\in \Hom(W,W((x_1, x)))$$
and
$$
\left({\phi_\ep(x_2,z)-x_2}\right)^kY_{\E}^\ep(Y(u,x),z)Y_W(v,x)
=(x_1-x)^kY_W(u,x_1)Y_W(v,x)|_{x_1=\phi_\ep(x_2,z)}.
$$
On the other hand, from the definition of $Y^\ep_\E (\cdot, x)$ we have
$$
\left({\phi_\ep(x_2,z)-x_2}\right)^kY_{\E}^\ep(Y_W(u,x),z)Y_W(v,x)
=(x_1-x)^kY_W(u,x_1)Y_W(v,x)|_{x_1=\phi_\ep(x_2,z)}.
$$
It follows that
$$
\left({\phi_\ep(x_2,z)-x_2}\right)^kY_{\E}^\ep(Y(u,x),z)Y_W(v,x)=\left({\phi_\ep(x_2,z)-x_2}\right)^kY_{\E}^\ep(Y_W(u,x),z)Y_W(v,x).
$$
Since both $Y_{\E}^\ep(Y(u,x),z)Y_W(v,x)$ and $Y_{\E}^\ep(Y_W(u,x),z)Y_W(v,x)$ involve only finitely many
negative powers of $z$, and since $(\phi_{\ep}(x_2,z)-x_2)^k$ is a unit in $\bC((x_{2}))((z))$ (by Lemma \ref{esimple-fact}),
by cancellation we get
$$
Y_{\E}^\ep(Y(u,x),z)Y_W(v,x)=Y_{\E}^\ep(Y_W(u,x),z)Y_W(v,x).
$$
Then $(V_W, Y_{\E}^{\ep} , 1_W)$ is a vertex algebra,
and $Y_W$ is a homomorphism of  vertex algebras.
\end{proof}

We also have the following result generalizing the corresponding results of \cite{Lg1, L6}:

\begin{lem}\label{Le}
 Let $W$ be a vector space and let
\begin{eqnarray*}
&&A(x_1,x_2)\in \Hom(W,W((x_1))((x_2))),\quad B(x_1, x_2) \in \Hom(W,W((x_2))((x_1))),\\
&&C(x_0, x_2)\in (\Hom(W,W((x_2))))((x_0)).
\end{eqnarray*}
If there exists a nonnegative integer $k$ such that
\begin{eqnarray*}
&&(x_1-x_2)^k A(x_1,x_2)= (x_1-x_2)^kB(x_1, x_2),\\
&&\left((x_1-x_2)^k A(x_1, x_2)\right)|_{x_1=\phi_\ep(x_2,x_0)}=(\phi_\ep(x_2,x_0)-x_2)^kC(x_0,x_2),
\end{eqnarray*}
then
\begin{eqnarray}\label{L}
&&(x_2z)^{-1}\de\left(\frac{x_1-x_2}{x_2z}\right)A(x_1,x_2)
-(x_2z)^{-1}\de\left(\frac{x_2-x_1}{-x_2z}\right)B(x_1,x_2)\nonumber\\
&&=x_1^{-1}\de\left(\frac{x_2(1+z)}{x_1}\right)C(f_\ep(x_2,z),x_2),
\end{eqnarray}
where
$$
f_\ep(x_2,z)=\begin{cases}
&x_2^{1-\ep}\cdot \frac{(1+z)^{1-\ep}-1}{1-\ep},\quad \ \ \text{for}\ \ep\neq1,\\
&\log (1+z),\quad\quad\quad\quad \text{for}\ \ep=1.
\end{cases}
$$
\end{lem}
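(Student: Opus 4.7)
The plan is to derive (\ref{L}) by the standard passage from locality and associativity to a Jacobi-type identity, suitably adapted to the $\phi_\ep$-setting. The key observation, which I would establish first, is the identity
$$\phi_\ep(x_2, f_\ep(x_2, z)) = x_2(1+z).$$
This says that the substitution $x_1 = x_2(1+z)$ appearing on the right-hand side of (\ref{L}) coincides with the $\phi_\ep$-substitution $x_1 = \phi_\ep(x_2, x_0)$ evaluated at $x_0 = f_\ep(x_2, z)$. When $\ep = 1$, the identity reduces to $x_2 e^{\log(1+z)} = x_2(1+z)$. When $\ep \neq 1$, the closed form $\phi_\ep(x, z) = x(1-(\ep-1) z x^{\ep-1})^{-1/(\ep-1)}$, obtained from the paper's explicit formula for $\phi_{\ep+1}$ by the index shift $\ep \mapsto \ep - 1$, can be solved directly for the unique $f = f_\ep(x_2, z)$ satisfying $\phi_\ep(x_2, f) = x_2(1+z)$, and this $f$ agrees with the formula in the statement.

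Next I would apply the standard delta-function manipulation that converts locality into a Jacobi-type relation. From the hypothesis $(x_1-x_2)^k A(x_1, x_2) = (x_1-x_2)^k B(x_1, x_2)$ in $\Hom(W, W((x_1, x_2)))$, expanding the two delta functions in complementary regions and telescoping gives
\begin{align*}
&(x_2z)^{-1}\de\!\left(\frac{x_1-x_2}{x_2z}\right) A(x_1,x_2) - (x_2z)^{-1}\de\!\left(\frac{x_2-x_1}{-x_2z}\right) B(x_1,x_2) \\
&\quad = x_1^{-1}\de\!\left(\frac{x_2(1+z)}{x_1}\right)\, (x_2 z)^{-k}\, \bigl[(x_1-x_2)^k A(x_1,x_2)\bigr]\big|_{x_1 = x_2(1+z)}.
\end{align*}
Invoking the associativity-type hypothesis at $x_0 = f_\ep(x_2, z)$, which by the first step realizes the same substitution $x_1 = x_2(1+z)$, produces
$$\bigl[(x_1-x_2)^k A(x_1, x_2)\bigr]\big|_{x_1 = x_2(1+z)} = (\phi_\ep(x_2, f_\ep(x_2,z)) - x_2)^k\, C(f_\ep(x_2, z), x_2) = (x_2 z)^k\, C(f_\ep(x_2, z), x_2).$$
The factors $(x_2 z)^{\pm k}$ cancel and the right-hand side of (\ref{L}) emerges.

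The main obstacle, and the part requiring the most care, is verifying that all substitutions and formal manipulations are well-defined. Specifically: (i) substituting $x_0 \mapsto f_\ep(x_2, z)$ into $C(x_0, x_2) \in (\Hom(W, W((x_2))))((x_0))$ is legitimate because $f_\ep(x_2, z)$ has $z$-leading term equal to $x_2^{1-\ep} z$ (or simply $z$ when $\ep=1$) times a unit in $\bC[[z]]$, so the finitely many inverse powers of $x_0$ in $C$ translate into only finitely many inverse powers of $z$; (ii) the substitution $x_1 = x_2(1+z)$ into $(x_1-x_2)^k A \in \Hom(W, W((x_1, x_2)))$ is well-defined because $1+z$ is a unit in $\bC[[z]]$, sending $\Hom(W, W((x_1,x_2)))$ to $\Hom(W, W((x_2)))[[z]]$; (iii) the delta-function manipulation requires careful bookkeeping of the two different expansions of $A$ and $B$, using that the hypothesis $(x_1-x_2)^k A = (x_1-x_2)^k B$ precisely eliminates the discrepancy between them on the "singular" portion that feeds the $x_1^{-1}\de(\cdot)$ factor. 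These verifications are standard formal-calculus arguments but must be carried out with attention to expansion conventions.
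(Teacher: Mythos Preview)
Your proposal is correct and follows essentially the same route as the paper: substitute $x_0=x_2z$ into the three-term delta identity, use the delta substitution to trade $(x_2z)^k$ for $(x_1-x_2)^k$, invoke the locality and associativity hypotheses, and then use the key identity $\phi_\ep(x_2,f_\ep(x_2,z))=x_2(1+z)$ to match the right-hand side. Your explicit attention to the well-definedness of the substitutions (items (i)--(iii)) is more careful than the paper's presentation, but the underlying argument is the same.
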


\begin{proof}
We start with the standard delta-function identity
\begin{eqnarray*}
&&x_0^{-1}\de\left(\frac{x_1-x_2}{x_0}\right)-x_0^{-1}\de\left(\frac{x_2-x_1}{-x_0}\right)=x_2^{-1}\de\left(\frac{x_1-x_0}{x_2}\right).
\end{eqnarray*}
Substituting $x_0 = x_2z$ with $z$ a new formal variable, we have
\begin{eqnarray}
&&(x_2z)^{-1}\de\left(\frac{x_1-x_2}{x_2z}\right)
-(x_2z)^{-1}\de\left(\frac{x_2-x_1}{-x_2z}\right)=x_1^{-1}\de\left(\frac{x_2(1+z)}{x_1}\right).
\end{eqnarray}
Then we get
\begin{eqnarray*}
&&(x_2z)^{-1}\de\left(\frac{x_1-x_2}{x_2z}\right)(x_2z)^kA(x_1,x_2)
-(x_2z)^{-1}\de\left(\frac{x_2-x_1}{-x_2z}\right)(x_2z)^kB(x_1,x_2)\nonumber\\
&=&(x_2z)^{-1}\de\left(\frac{x_1-x_2}{x_2z}\right)(x_1-x_2)^kA(x_1,x_2)
-(x_2z)^{-1}\de\left(\frac{x_2-x_1}{-x_2z}\right)(x_1-x_2)^kB(x_1,x_2)\\
&=&x_1^{-1}\de\left(\frac{x_2(1+z)}{x_1}\right)\left((x_1-x_2)^kA(x_1,x_2)\right)\\
&=&x_1^{-1}\de\left(\frac{x_2(1+z)}{x_1}\right)\left((x_1-x_2)^kA(x_1,x_2)\right)|_{x_1=x_2(1+z)}\\
&=&x_1^{-1}\de\left(\frac{x_2(1+z)}{x_1}\right)\left(\left((x_1-x_2)^kA(x_1,x_2)\right)|_{x_1=\phi_\ep(x_2,x_0)}\right)|_{x_0=f_\ep(x_2,z)}\\
&=&x_1^{-1}\de\left(\frac{x_2(1+z)}{x_1}\right)(x_2z)^kC(f_\ep(x_2,z),x_2),
\end{eqnarray*}
noticing that
$$\phi_{\ep}(x,f_{\ep}(x,z))=x(1+z).$$
Then (\ref{L}) follows.
\end{proof}

As the main result of this section we have:

\begin{theo}\label{Ja}
Let $V$ be a vertex algebra and let $(W, Y_W)$ be a $\phi_\ep$-coordinated module.  Then
\begin{eqnarray}
&&(x_2z)^{-1}\de\left(\frac{x_1-x_2}{x_2z}\right)Y_W(u,x_1)Y_W(v,x_2) -(x_2z)^{-1}\de\left(\frac{x_2-x_1}{-x_2z}\right)Y_W(v,x_2)Y_W(u,x_1)\nonumber\\
&&=x_1^{-1}\de\left(\frac{x_2(1+z)}{x_1}\right)Y_W\left(Y(u, f_\ep(x_2,z))v,x_2\right)
\end{eqnarray}
for $u,v\in V$, where
$$
f_\ep(x,z)=\begin{cases}
&x^{1-\ep}\cdot \frac{(1+z)^{1-\ep}-1}{1-\ep},\quad \ \ \text{for}\ \ep\neq1,\\
&\log (1+z),\quad\quad\quad\quad \text{for}\ \ep=1.
\end{cases}
$$
Furthermore, we have
\begin{eqnarray}\label{Com}
[Y_W(u,x_1),Y_W(v,x_2)]=\sum_{j\geq0 }\frac{1}{j!}\left(x_2^{\ep}\frac{\pa}{\pa x_2}\right)^jx_1^{\ep-1}\de\left(\frac{x_2}{x_1}\right)Y_W(u_{j}v,x_2).
\end{eqnarray}
\end{theo}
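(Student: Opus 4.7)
The plan has two steps: I first deduce the Jacobi-type identity from Lemma~\ref{Le}, and then extract the commutator formula by taking $\Res_z$ of the Jacobi identity.

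For the Jacobi identity, set $A(x_1, x_2) = Y_W(u, x_1) Y_W(v, x_2)$, $B(x_1, x_2) = Y_W(v, x_2) Y_W(u, x_1)$, and $C(x_0, x_2) = Y_W(Y(u, x_0) v, x_2)$. The axioms for a $\phi_\ep$-coordinated module (Definition~\ref{dcq}) supply an integer $k$ with $(x_1-x_2)^k A(x_1, x_2) \in \Hom(W, W((x_1, x_2)))$ and $((x_1-x_2)^k A(x_1, x_2))|_{x_1 = \phi_\ep(x_2, x_0)} = (\phi_\ep(x_2, x_0) - x_2)^k C(x_0, x_2)$, providing the second hypothesis of Lemma~\ref{Le}. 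Proposition~\ref{P2} delivers the locality $(x_1-x_2)^k A(x_1, x_2) = (x_1-x_2)^k B(x_1, x_2)$, which is the first hypothesis. Lemma~\ref{Le} then yields the Jacobi-type identity directly.

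For the commutator formula, I take $\Res_z$ of both sides. On the left, the expansion $(x_2 z)^{-1}\delta(\frac{x_1-x_2}{x_2 z}) = \sum_n (x_1-x_2)^n x_2^{-n-1} z^{-n-1}$ (and its counterpart) isolates the $n=0$ term, giving $\Res_z[\text{LHS}] = x_2^{-1}[Y_W(u, x_1), Y_W(v, x_2)]$. On the right, I perform the substitution $x_0 = f_\ep(x_2, z)$, so that $x_2(1+z) = \phi_\ep(x_2, x_0)$. The defining ODE $\partial_{x_0}\phi_\ep(x_2, x_0) = \phi_\ep(x_2, x_0)^\ep$ (coming from $\phi_\ep = e^{x_0 x_2^\ep \partial_{x_2}} x_2$) produces the Jacobian $dz = x_2^{-1}\phi_\ep(x_2, x_0)^\ep\,dx_0$, so
\begin{eqnarray*}
\Res_z[\text{RHS}] = x_2^{-1}\Res_{x_0}\Bigl[\phi_\ep(x_2, x_0)^\ep x_1^{-1}\delta\Bigl(\frac{\phi_\ep(x_2, x_0)}{x_1}\Bigr) Y_W(Y(u, x_0) v, x_2)\Bigr].
\end{eqnarray*}
The method-of-characteristics identity $F(\phi_\ep(x_2, x_0)) = \sum_{k\ge 0}\frac{x_0^k}{k!}(x_2^\ep \partial_{x_2})^k F(x_2)$ applied with $F(y) = y^\ep x_1^{-1}\delta(y/x_1)$, together with the standard relation $x_2^\ep x_1^{-1}\delta(x_2/x_1) = x_1^{\ep-1}\delta(x_2/x_1)$, expands
\begin{eqnarray*}
\phi_\ep(x_2, x_0)^\ep x_1^{-1}\delta\Bigl(\frac{\phi_\ep(x_2, x_0)}{x_1}\Bigr) = \sum_{k\ge 0}\frac{x_0^k}{k!}(x_2^\ep \partial_{x_2})^k \bigl(x_1^{\ep-1}\delta(x_2/x_1)\bigr).
\end{eqnarray*}
Pairing the coefficient of $x_0^j$ here with $Y_W(u_j v, x_2) x_0^{-j-1}$ from $Y_W(Y(u, x_0) v, x_2) = \sum_j Y_W(u_j v, x_2) x_0^{-j-1}$, taking $\Res_{x_0}$, and multiplying by $x_2$ produces the commutator formula.

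The main technical point is the residue change of variables combined with the Taylor expansion of $\phi_\ep^\ep x_1^{-1}\delta(\phi_\ep/x_1)$ in $x_0$. Once these are set up, everything reduces to standard formal delta-function manipulations, and the case $\ep = 0$ recovers the classical Jacobi identity and commutator formula for vertex algebra modules.
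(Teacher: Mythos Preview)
Your proof is correct and follows essentially the same route as the paper: the Jacobi-type identity is obtained by feeding the $\phi_\ep$-coordinated module axioms together with Proposition~\ref{P2} into Lemma~\ref{Le}, and the commutator formula is then extracted by taking $\Res_z$, performing the change of variable $z\mapsto x_0$ via $x_2(1+z)=\phi_\ep(x_2,x_0)$ with Jacobian $\partial_{x_0}\phi_\ep(x_2,x_0)=\phi_\ep(x_2,x_0)^\ep$, and Taylor-expanding using $e^{x_0 x_2^\ep\partial_{x_2}}$. The only cosmetic difference is that the paper first uses the delta-function substitution $\phi_\ep(x_2,x_0)^\ep x_1^{-1}\delta(\phi_\ep/x_1)=x_1^{\ep-1}\delta(\phi_\ep/x_1)$ before expanding, whereas you apply the operator exponential directly to $F(y)=y^\ep x_1^{-1}\delta(y/x_1)$; both yield $e^{x_0 x_2^\ep\partial_{x_2}}\bigl(x_1^{\ep-1}\delta(x_2/x_1)\bigr)$.
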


\begin{proof}From definition, there exists $k\in\bN$ such that
$$
(x_1-x_2)^kY_W(u,x_1)Y_W(v,x_2)\in\Hom(W,W((x_1, x_2)))
$$
and
$$
(\phi_\ep(x_2,x_0)-x_2)^kY_W(Y(u,x_0)v,x_2)
=\left((x_1-x_2)^kY_W(u,x_1)Y_W(v,x_2)\right)|_{x_1=\phi_\ep(x_2,x_0)}.
$$
On the other hand, by Proposition \ref{P2} we also have
\begin{eqnarray}
(x_1-x_2)^k Y_W(u,x_1)Y_W(v,x_2)=(x_1-x_2)^kY_W(v,x_2)Y_W(u,x_1).
\end{eqnarray}
Then the first assertion follows immediately from Lemma \ref{Le}. Furthermore, applying
$\Res_z x_2$ we get
\begin{eqnarray*}
&&[Y_W(u,x_1),Y_W(v,x_2)]\\
&=&\Res_z x_1^{-1}\de\left(\frac{x_2(1+z)}{x_1}\right)x_2 Y_W\left(Y(u, f_\ep(x_2,z))v,x_2\right)\\
&=&\Res_{x_0} x_1^{-1}\de\left(\frac{\phi_\ep(x_2,x_0)}{x_1}\right)x_2\frac{\partial}{\partial x_0}\left(\frac{\phi_\ep(x_2,x_0)}{x_2}-1\right) Y_W(Y(u,x_0)v,x_2)\\
&=&\Res_{x_0} x_1^{-1}\de\left(\frac{\phi_\ep(x_2,x_0)}{x_1}\right)\phi_{\ep}(x_{2},x_{0})^{\ep}
 Y_W(Y(u,x_0)v,x_2)\\
&=&\Res_{x_0} x_1^{\ep-1}\de\left(\frac{\phi_\ep(x_2,x_0)}{x_1}\right)
 Y_W(Y(u,x_0)v,x_2)\\
&=&\sum_{j\geq0 }\frac{1}{j!}\left[\left(x_2^{\ep}\frac{\pa}{\pa x_2}\right)^jx_1^{\ep-1}\de\left(\frac{x_2}{x_1}\right)\right]Y_W(u_{j}v,x_2),
\end{eqnarray*}
noticing that
$$
\frac{\partial}{\partial x_0}\phi_\ep(x_2,x_0)
=e^{x_{0}x_{2}^{\ep}\frac{\partial}{\partial x_2}}\left(x_{2}^{\ep}\frac{\partial}{\partial x_2}\right)(x_{2})
=e^{x_{0}x_{2}^{\ep}\frac{\partial}{\partial x_2}}(x_{2}^{\ep})
=\left(e^{x_{0}x_{2}^{\ep}\frac{\partial}{\partial x_2}}x_{2}\right)^{\ep}
=\phi_{\ep}(x_{2},x_{0})^{\ep}.
$$
This proves the second assertion.
\end{proof}

\begin{rem}\label{basic-facts}
 {\em We here collect some basic facts that we shall use. We have
\begin{eqnarray}
\left(x_{2}^{\ep}\frac{\partial}{\partial x_{2}}\right)
x_{1}^{\ep-1}\delta\left(\frac{x_{2}}{x_{1}}\right)=-\left(x_{1}^{\ep}\frac{\partial}{\partial x_{1}}\right)
x_{2}^{\ep-1}\delta\left(\frac{x_{1}}{x_{2}}\right),
\end{eqnarray}
\begin{eqnarray}
(x_{1}-x_{2})^{m}\left(x_{2}^{\ep}\frac{\partial}{\partial x_{2}}\right)^{n}
x_{1}^{-1}\delta\left(\frac{x_{2}}{x_{1}}\right)=0
\end{eqnarray}
for any nonnegative integers $m$ and $n$ with $m>n$, and
\begin{eqnarray}
(x_{1}-x_{2})^{n}\left(x_{2}^{\ep}\frac{\partial}{\partial x_{2}}\right)^{n}
x_{1}^{-1}\delta\left(\frac{x_{2}}{x_{1}}\right)
=x_{2}^{n\ep}(x_{1}-x_{2})^{n}\left(\frac{\partial}{\partial x_{2}}\right)^{n}
x_{1}^{-1}\delta\left(\frac{x_{2}}{x_{1}}\right)
\end{eqnarray}
for any nonnegative integer $n$. Furthermore, we have
\begin{eqnarray}
\Res_{x_{1}}x_{1}^{-\ep}(x_{1}-x_{2})^{n}\left(x_{2}^{\ep}\frac{\partial}{\partial x_{2}}\right)^{n}
x_{1}^{\ep-1}\delta\left(\frac{x_{2}}{x_{1}}\right)=\frac{x_{2}^{n\ep}}{n!}.
\end{eqnarray}
These facts can be proved by using the special case with $\ep=0$ (cf. \cite{L1}) and the facts that for any positive integer $n$, there exists polynomials $f_{1}(x),\dots, f_{n}(x)$ such that
\begin{eqnarray}
\left(x_{2}^{\ep}\frac{\partial}{\partial x_{2}}\right)^{n}
=x_{2}^{n\ep}\left(\frac{\partial}{\partial x_{2}}\right)^{n}+f_{1}(x_{2})\left(\frac{\partial}{\partial x_{2}}\right)^{n-1}+\cdots +f_{n}(x_{2}).
\end{eqnarray}}
\end{rem}

The following, which is a generalization of a result in  \cite{L1},
follows immediately from Theorem \ref{Ja} and the basic facts in Remark \ref{basic-facts}:

\begin{lem}\label{faith}
Let $V$ be a vertex algebra and let $(W,Y_{W})$ be a faithful $\phi_{\ep}$-coordinated $V$-module.
Suppose that
\begin{eqnarray}
[Y_W(u,x_1),Y_W(v,x_2)]=\sum_{j\geq0 }\frac{1}{j!}\left(x_2^{\ep}\frac{\pa}{\pa x_2}\right)^jx_1^{\ep-1}\de\left(\frac{x_2}{x_1}\right)Y_W(A^{j},x_2),
\end{eqnarray}
where $u,v, A^{0},A^{1},\dots$ are fixed vectors in $V$. Then $A^{j}=u_{j}v$ for all $j\ge 0$.
\end{lem}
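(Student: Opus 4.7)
The plan is to set $B^j := A^j - u_j v$ for each $j \geq 0$ and reduce the claim to showing that every $B^j$ vanishes. Combining the hypothesis with the commutator formula supplied by Theorem \ref{Ja} and subtracting yields the master identity
\begin{equation}\label{mastereq}
\sum_{j \geq 0}\frac{1}{j!}\left(x_2^\ep \frac{\partial}{\partial x_2}\right)^j x_1^{\ep - 1}\delta\left(\frac{x_2}{x_1}\right)Y_W(B^j, x_2) = 0.
\end{equation}
Because $V$ is a vertex algebra, the sequence $(u_j v)_{j \geq 0}$ has finite support, and the well-definedness of the hypothesized sum as an operator-valued formal series forces $(A^j)_{j \geq 0}$ to have finite support as well; hence there exists $N$ such that $B^j = 0$ for all $j > N$. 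By faithfulness of $W$, it suffices to show $Y_W(B^j, x_2) = 0$ for each $j$.

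To isolate $Y_W(B^n, x_2)$ from \eqref{mastereq}, I apply the extraction operation $\Res_{x_1}\, x_1^{-\ep}(x_1 - x_2)^n$. Using $x_1^{-\ep}\cdot x_1^{\ep - 1}\delta(x_2/x_1) = x_1^{-1}\delta(x_2/x_1)$ and the fact that $x_1^{-\ep}$ commutes with the $x_2$-differential operators, the $j$-th term reduces to
\[
\frac{1}{j!}\Res_{x_1}(x_1 - x_2)^n \left(x_2^\ep \frac{\partial}{\partial x_2}\right)^j x_1^{-1}\delta\left(\frac{x_2}{x_1}\right)Y_W(B^j, x_2).
\]
By the facts in Remark \ref{basic-facts}, the $j < n$ terms vanish because $(x_1 - x_2)^n (x_2^\ep \partial/\partial x_2)^j x_1^{-1}\delta = 0$ whenever $n > j$. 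For $j = n$, the expansion $(x_2^\ep \partial/\partial x_2)^n = x_2^{n\ep}(\partial/\partial x_2)^n + \text{lower order}$ combined with $(x_1 - x_2)^n (\partial/\partial x_2)^n x_1^{-1}\delta(x_2/x_1) = n!\, x_1^{-1}\delta(x_2/x_1)$ (the lower-order pieces being killed by $(x_1 - x_2)^n$) gives $\Res_{x_1}(x_1 - x_2)^n(x_2^\ep \partial/\partial x_2)^n x_1^{-1}\delta = n!\,x_2^{n\ep}$, so the $j = n$ contribution is exactly $x_2^{n\ep}Y_W(B^n, x_2)$.

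I then conclude by downward induction on $n$, starting at $n = N$. At $n = N$ the $j > N$ terms in \eqref{mastereq} are absent by finite support, so the extraction reduces to $x_2^{N\ep}Y_W(B^N, x_2) = 0$; since $x_2^{N\ep}$ is a unit in $\bC((x_2))$ and $W$ is faithful, this forces $B^N = 0$. Inductively, once $B^{n+1} = \cdots = B^N = 0$ has been established, the same extraction at level $n$ yields $x_2^{n\ep}Y_W(B^n, x_2) = 0$ and hence $B^n = 0$. Descending from $n = N$ to $n = 0$ gives $A^j = u_j v$ for every $j \geq 0$.

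The main obstacle is that a single application of $\Res_{x_1}x_1^{-\ep}(x_1 - x_2)^n$ to \eqref{mastereq} does \emph{not} automatically annihilate the $j > n$ contributions, unlike in the classical $\ep = 0$ case where $(x_1 - x_2)^n\partial_{x_2}^j x_1^{-1}\delta$ cleanly extracts only the $j = n$ mode. This is precisely why the argument must be organized as downward induction, with the vertex algebra truncation providing the finite-support starting point needed to prime the induction.
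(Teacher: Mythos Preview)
Your proof is correct and follows exactly the route the paper intends: the paper's own argument is the single sentence ``follows immediately from Theorem~\ref{Ja} and the basic facts in Remark~\ref{basic-facts},'' and you have written out precisely those details---subtracting the commutator formula (\ref{Com}) from the hypothesis, then applying $\Res_{x_1}x_1^{-\ep}(x_1-x_2)^n$ and the identities in Remark~\ref{basic-facts} in a downward induction. One small remark: your justification that the $A^j$ have finite support (``well-definedness of the hypothesized sum \ldots forces finite support'') is somewhat informal; in practice the lemma is applied only to finite sums, and the paper tacitly treats it that way, so this is not a genuine gap.
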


\section{Vertex algebras arising from Novikov algebras }

In this section, we study $\phi_\ep$-coordinated modules for the vertex algebras associated to
Novikov algebras by Primc.

We first recall the definition of a Novikov algebra (see \cite{BN},
\cite{GD1}, \cite{O1}).

\begin{defi}\label{novikov}
{\em A {\em (left) Novikov algebra} is a non-associative algebra $\A$ satisfying
\begin{eqnarray}
(ab)c-a(bc)&=&(ba)c-b(ac),\label{N1}\\
(ab)c&=&(ac)b\label{N2}
\end{eqnarray}
for $a,b,c\in \A$.}
\end{defi}

Note that any commutative and associative algebra is a Novikov
algebra.


\begin{rem}\label{RG}
{\em  We here recall the Gelfand construction of Novikov algebras  due to S. Gelfand
(see  \cite{GD1}).
Let $A$ be a commutative associative algebra with
a derivation $\pa$. Define a new operation $\circ $ on $A$ by
$a\circ b= a\pa b$ for $a,b\in A$. Then $(A,\circ )$ is a (left) Novikov
algebra.
}
\end{rem}

The following result was due to Balinsky and Novikov (see
\cite{BN}):

\begin{pro}\label{BN}
Let $\A$ be a non-associative algebra. Set
\begin{equation}
L(\A)=\A\otimes \bC[t,t^{-1}],\quad \pa=\frac{d}{dt}.
\end{equation}
Define a bilinear operation $[\cdot,\cdot]$  on  $L(\A)$ by
\begin{equation}\label{def1}
[a\ot f, b\ot g]= ab \ot (\pa f)g- ba \ot (\pa g)f
\end{equation}
for $a,b\in \A,\ f,g\in \bC[t,t^{-1}]$. Then $(L(\A),[\cdot,\cdot])$
is a Lie algebra if and only if $\A$ is a Novikov algebra.
\end{pro}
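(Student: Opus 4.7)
The plan is to check each of the axioms (bilinearity, skew-symmetry, and the Jacobi identity) for the bracket in \eqref{def1}. Bilinearity is manifest. Skew-symmetry follows by direct inspection: swapping the two arguments in $[a\ot f, b\ot g] = ab \ot (\pa f)g - ba \ot (\pa g)f$ exactly flips the sign, with no condition on $\A$. Thus the entire statement reduces to showing that the cyclic sum
\[
S := [[a\ot f, b\ot g], c\ot h] + [[b\ot g, c\ot h], a\ot f] + [[c\ot h, a\ot f], b\ot g]
\]
vanishes for all $a,b,c \in \A$ and all $f,g,h \in \bC[t,t^{-1}]$ if and only if $\A$ satisfies the Novikov identities \eqref{N1} and \eqref{N2}.

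Next I would expand $S$ by successively applying \eqref{def1} together with the Leibniz rule for $\pa$, producing an expression of the form $\sum_\mu P_\mu(a,b,c) \ot \mu(f,g,h)$, where each $\mu$ is a differential monomial in $f, g, h$ and $P_\mu$ is an algebraic expression in $a,b,c$. The key observation is that every term in $S$ carries exactly two derivatives distributed among $f, g, h$, so only six differential monomials can appear, namely
\[
(\pa^2 f)gh,\ (\pa^2 g)fh,\ (\pa^2 h)fg,\ (\pa f)(\pa g)h,\ (\pa f)(\pa h)g,\ (\pa g)(\pa h)f.
\]
These are linearly independent in $\bC[t,t^{-1}]$ (as can be seen by specializing $f, g, h$ to suitable distinct monomials $t^m, t^n, t^p$), so $S = 0$ for all $f,g,h$ is equivalent to each coefficient $P_\mu(a,b,c)$ vanishing identically in $a,b,c$.

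Then I would compute the six coefficients and observe the following pattern. The three coefficients attached to the single-second-derivative monomials each read, up to relabeling of $a,b,c$, as $(ab)c - (ac)b = 0$, which is exactly \eqref{N2}. The three coefficients attached to the mixed monomials $(\pa f)(\pa g)h$, $(\pa f)(\pa h)g$, $(\pa g)(\pa h)f$ each read, up to relabeling, as $(ab)c - a(bc) - (ba)c + b(ac) = 0$, which is exactly \eqref{N1}. Thus $S = 0$ for all choices of $f,g,h$ if and only if both Novikov axioms hold, giving both implications at once.

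The principal obstacle is the bookkeeping: the triple bracket produces on the order of eighteen elementary summands, and keeping track of signs and of the order in which elements are multiplied in $\A$ (essential because $\A$ is non-associative and non-commutative) is delicate. Organizing the expansion in a table indexed by the six differential monomials above is what makes the argument go through cleanly and makes the matching with \eqref{N1} and \eqref{N2} transparent.
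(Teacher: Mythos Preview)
Your argument is correct and is the standard direct computation: expand the cyclic sum, sort the eighteen summands by the six second-order differential monomials, and use that these monomials are linearly independent as trilinear maps $\bC[t,t^{-1}]^{3}\to\bC[t,t^{-1}]$ (your specialization $f=t^{m},\,g=t^{n},\,h=t^{p}$ does the job, since $m(m-1),\,n(n-1),\,p(p-1),\,mn,\,mp,\,np$ are linearly independent polynomials in $m,n,p$). One small wording quibble: they are linearly independent as trilinear forms, not ``in $\bC[t,t^{-1}]$,'' but you already say what you mean in the parenthetical.

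As for comparison with the paper: there is nothing to compare against. The paper does not prove this proposition; it merely records it as a result of Balinsky and Novikov (reference \cite{BN}) and moves on. Your write-up is essentially the classical proof from that source, so it is entirely appropriate here.
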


The following refinement was due to Primc (see \cite{Pr}, Example 3; cf. \cite{BN}):

\begin{pro}\label{LC1}
Let $\A$ be a non-associative algebra equipped with a bilinear form
$\lan\cdot,\cdot\ran$. Set
$$
\widetilde L(\A)=L(\A)\oplus \bC \c,
$$
where $\c$ is a distinguished nonzero element.
For $a\in\A,\ m\in\bZ$, set $L(a,m)=a\ot t^{m+1}$. Define a bilinear
operation $[\cdot,\cdot]$ on $\widetilde L(A)$ by
\begin{eqnarray}\label{EC}
[L(a,m),L(b,n)]&=&(m+1)L(ab, m+n)-(n+1)L(ba, m+n)\nonumber\\
&&\ \ \ \ \quad +\frac{1}{12}(m^3-m)\lan a,b\ran \de_{m+n,0}\c,\\
&&[\c, \widetilde L(\A)]=0=[\widetilde L(\A),\c]
\end{eqnarray}
for $a,b\in\A$, $m,n\in\bZ$. Then $(\widetilde L(A),[\cdot,\cdot])$
is a Lie algebra if and only if $\A$ is a Novikov algebra and
$\lan\cdot, \cdot\ran$ is a symmetric form satisfying
\begin{eqnarray}\label{In}
&&\lan ab,c\ran=\lan a,bc\ran,\ \ \ \ \lan ab,c\ran=\lan ba,c\ran
\quad\text{for}\ a,b,c\in\A.
\end{eqnarray}
\end{pro}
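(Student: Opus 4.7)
The plan is to verify the Lie-algebra axioms for $\widetilde L(\A)$ by projecting every identity onto its $L(\A)$-component and its $\c$-component. Since $\c$ is central by fiat and the central part of $[L(a,m),L(b,n)]$ comes solely from the cocycle $\omega(L(a,m),L(b,n))=\frac{1}{12}(m^{3}-m)\lan a,b\ran\de_{m+n,0}\c$, this decomposition is clean: each Lie-algebra axiom for $\widetilde L(\A)$ is equivalent to the conjunction of the corresponding axiom for the bracket (\ref{def1}) on $L(\A)$ and the corresponding statement about $\omega$.

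For antisymmetry, the $L(\A)$-part of $[L(a,m),L(b,n)]+[L(b,n),L(a,m)]$ cancels identically, while the $\c$-part (using $n=-m$ and $n^{3}-n=-(m^{3}-m)$) equals $\frac{1}{12}(m^{3}-m)(\lan a,b\ran-\lan b,a\ran)\de_{m+n,0}\c$. Since $m^{3}-m\neq 0$ for $|m|\ge 2$, antisymmetry is equivalent to symmetry of $\lan\cdot,\cdot\ran$.

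For Jacobi, I first project onto the $L(\A)$-component: the result is exactly the Jacobi identity for the Balinsky--Novikov bracket, so by Proposition \ref{BN} it holds if and only if $\A$ is a Novikov algebra. I then project onto the $\c$-component. Using $m+n+p=0$ and $(m+n)^{3}-(m+n)=-(p^{3}-p)$, the cyclic sum reduces to
$$(p^{3}-p)\bigl[(m+1)\lan ab,c\ran-(n+1)\lan ba,c\ran\bigr]+(\text{two cyclic terms})=0$$
for all $a,b,c\in\A$ and all $m,n,p\in\bZ$ with $m+n+p=0$. Assuming symmetry of $\lan\cdot,\cdot\ran$ together with (\ref{In}), every bilinear-form value collapses to the common scalar $\lan ab,c\ran$, and what remains is the purely numerical identity $(p^{3}-p)(m-n)+(m^{3}-m)(n-p)+(n^{3}-n)(p-m)=0$ whenever $m+n+p=0$, which is a routine polynomial check via the substitution $p=-m-n$.

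For the converse direction of the $\c$-component condition, I extract (\ref{In}) by specialization. Choosing $m=n=1,\ p=-2$ kills the two terms weighted by $m^{3}-m$ and $n^{3}-n$ and forces $\lan ab,c\ran=\lan ba,c\ran$; then $m=2,\ n=1,\ p=-3$, combined with the previous relation and symmetry, yields $\lan ab,c\ran=\lan a,bc\ran$. The main obstacle throughout is bookkeeping---three cyclic versions of the cocycle together with the signs produced by the substitution $p=-m-n$---but no conceptual input is needed beyond Proposition \ref{BN}.
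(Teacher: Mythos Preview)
Your argument is correct. The decomposition into the $L(\A)$-component and the $\c$-component is exactly the right move: Proposition \ref{BN} handles the former, and your analysis of the cocycle $\omega$ handles the latter. The specializations $m=n=1,\ p=-2$ and $m=2,\ n=1,\ p=-3$ are well chosen and do extract the two relations in (\ref{In}); the polynomial identity $(p^{3}-p)(m-n)+(m^{3}-m)(n-p)+(n^{3}-n)(p-m)=0$ on $m+n+p=0$ is indeed routine.

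One point of comparison: the paper does not give its own proof of this proposition---it is quoted from Primc \cite{Pr} (see the sentence preceding the statement). So there is no ``paper's proof'' to compare against; your self-contained verification is a genuine addition. The only small suggestion is presentational: when you write the cyclic sum with the factor $(p^{3}-p)$, note explicitly that this comes from $((m+n)^{3}-(m+n))=-(p^{3}-p)$ after a global sign change, so that a reader does not suspect a sign slip.
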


\begin{rem}
{\rm Note that a unital Novikov algebra $\A$ with a
symmetric bilinear form $\lan\cdot,\cdot\ran$  satisfying (\ref{In})
amounts to a Frobenius algebra, i.e., a unital commutative and
associative algebra with a nondegenerate symmetric and associative form. For a
Frobenius algebra $\A$, the corresponding Lie algebra $\widetilde L(\A)$ is
isomorphic to the map Virasoro algebra (see \cite{La,Sa}).
In particular, if $\A=\bC e$ is 1-dimensional with $e\cdot e= e$ and
$\lan e,e\ran=\frac{1}{12}$, then $\widetilde L(\A)$ is isomorphic to the Virasoro algebra.
More examples can be found in \cite{PB1,PB2,PB3}. }
\end{rem}

\begin{exa}{\rm
Let $\A=\bC[x,x^{-1}]$ and let $p(x)\in \bC[x,x^{-1}]$. By the
Gelfand construction, one has a Novikov algebra $(\A,\circ_{p(x)})$,
where
$$x^i\circ_{p(x)} x^j
=x^i\left(p(x)\frac{d}{dx}\right)x^j= j x^{i+j-1}p(x)\quad
\text{for}\ i,j\in\bZ.$$ In this case, the corresponding Lie algebra $L(\A)$ is a
Lie algebra of Block type (cf. \cite{B, DZ}). In particular, if
$p(x)=1$, $L(\A)$ is isomorphic to the Poisson
algebra $\bC[x,x^{-1},y,y^{-1}]$ with bracket relation
\begin{eqnarray*}
[f,g]=\frac{\pa f}{\pa y}\frac{\pa g}{\pa x}-\frac{\pa f}{\pa x}\frac{\pa g}{\pa y}
\quad\text{for}\ f,g\in\bC[x,x^{-1},y,y^{-1}].
\end{eqnarray*}
For this special case, taking a basis $L_m^i=x^{i+1+m}y^{m+1}$ for $m,i\in\bZ$, we have
\begin{eqnarray}\label{Poisson}
[L_m^i,L_n^j]=(j(m+1)-i(n+1))L_{m+n}^{i+j}\quad\text{for}\ i,j,m,n\in\bZ.
\end{eqnarray}
The structure and representation theory of this Lie algebra and its
subalgebras have been extensively studied in \cite{Ba, Su, Wi,WT}.}
\end{exa}

Let $\A$ be a Novikov algebra equipped with a symmetric
bilinear form $\lan\cdot,\cdot\ran$  satisfying (\ref{In}).
Following Primc \cite{Pr} we associate vertex algebras to $\A$. For
$a\in \A$, set
\begin{equation}
L(a,x)=\sum_{n\in\bZ} L(a,n)x^{-n-2}\in \widetilde
L(\A)[[x,x^{-1}]].
\end{equation}
In terms of generating functions the relation (\ref{EC}) can be
rewritten as
\begin{eqnarray}\label{GR}
&&[L(a,x_1),L(b,x_2)]\nonumber\\
&=&\left(\frac{\pa}{\pa x_2}L(ba,x_2)\right)x_1^{-1}\de\left(\frac{x_2}{x_1}\right)
+(L(ab,x_2)+L(ba, x_2))\left(\frac{\pa}{\pa x_2}\right)x_{1}^{-1}\de\left(\frac{x_2}{x_1}\right)\nonumber\\
&&\ \ \ \ \quad+\frac{1}{12}\lan a,b\ran \c \left(\frac{\pa}{\pa
x_2}\right)^3x_{1}^{-1}\de\left(\frac{x_2}{x_1}\right)
\end{eqnarray}
for $a,b\in \A$. Set
$$
\widetilde L(\A)_{+}=\A\ot\bC[t]\oplus\bC\c,\quad \widetilde
L(\A)_{-}=\A\ot t^{-1}\bC[t^{-1}].
$$
Note that $\widetilde L(\A)_{\pm}$ are Lie subalgebras and
$\widetilde L(\A)=\widetilde L(\A)_{+}\oplus \widetilde L(\A)_{-}$
as a vector space. Let $\ell\in\bC$ and denote by  $\bC_{\ell}$ the
one-dimensional $ \widetilde L(\A)_{+}$-module with $\c$ acting as
scalar $\ell$ and with $\A\ot \bC[t]$ acting trivially. Form an
induced module
\begin{eqnarray}
V_{\widetilde L(\A)}(\ell,0)=U(\widetilde L(\A))\ot_{U(\widetilde
L(\A)_{+})}\bC_\ell.
\end{eqnarray}
Set ${\bf1}=1\ot 1\in V_{\widetilde L(\A)}(\ell,0)$ and identify $\A$ as a subspace of
$V_{\widetilde L(\A)}(\ell,0)$ through the linear map
$$a \mapsto L(a,-2){\bf 1}\quad\text{for}\ a\in\A.$$
From \cite{Pr} (cf. \cite{DLM,Xu}), there exists a vertex algebra
structure on $V_{\widetilde L(\A)}(\ell,0)$, which is uniquely
determined by the condition that ${\bf 1}$ is the vacuum vector and
$$Y(a, x) = L(a,x)=\sum_{n\in\bZ}L(a,n)x^{-n-2} \quad\text{for}\ a\in \A.$$
Furthermore, $\A$ is a generating subspace of vertex algebra
$V_{\widetilde L(\A)}(\ell,0)$ with
\begin{eqnarray}\label{eva-information}
&&a_0b= \D(ba),\quad a_1b=ab+ba,\quad
a_3b=\frac{1}{2}\ell \lan a,b \ran{\bf 1},\quad a_2b=0=a_kb\ \ \ \
\label{V3}
\end{eqnarray}
for $a,b\in \A$ and for $k\geq 4$.

Next, we discuss a $\bZ$-graded vertex algebra structure on $V_{\widetilde L(\A)}(\ell,0)$.

\begin{defi}\label{def-zgva}
{\em A {\em $\bZ$-graded vertex algebra} is a vertex algebra $V$ equipped with a $\bZ$-grading
$V=\oplus_{n\in \bZ}V_{n}$ such that ${\bf 1}\in V_{0}$ and
\begin{eqnarray}
u_{k}v\in V_{m+n-k-1}\ \ \ \mbox{ for }u\in V_{m},\ v\in V_{n},\ m,n,k\in \bZ.
\end{eqnarray}}
\end{defi}

Let $(\A,\lan\cdot,\cdot\ran)$ be given as before.
It can be readily seen that $\widetilde{L}(\A)$ is a $\bZ$-graded Lie algebra with
$\deg \c=0$ and
\begin{eqnarray}
\deg (a\otimes t^{m})=\deg (L(a,m-1))=-m+1\ \ \ \mbox{ for }a\in \A,\ m\in \bZ.
\end{eqnarray}
As $\widetilde L(\A)_{+}$ is a graded subalgebra, $V_{\widetilde L(\A)}(\ell,0)$ is naturally a $\bZ$-graded $\widetilde L(\A)$-module with $\deg {\bf 1}=0$ and with $V_{\widetilde L(\A)}(\ell,0)_{2}=\A$.
Furthermore, by Lemma A (in Appendix) $V_{\widetilde L(\A)}(\ell,0)$ equipped with this $\bZ$-grading is a $\bZ$-graded vertex algebra. In view of the PBW Theorem,
 $\D a=a_{-2}{\bf 1}\ne 0$ for any nonzero $a\in \A$ and $V_{\widetilde L(\A)}(\ell,0)$
is linearly spanned by the vectors
$$a^{(1)}_{-m_1}\cdots a^{(r)}_{-m_r}{\bf 1}$$
for $r\geq 0, \ a^{(i)}\in \A,\ m_i\geq 1$.

On the other hand, we have (also see \cite{BKL}):

\begin{pro}\label{pcharacterization}
Let $V=\oplus_{n\in \bZ}V_{n}$ be a $\bZ$-graded vertex algebra with
the following properties:
\begin{itemize}
\item[(1)] $V_n=0\quad \text{for}\  n< 0$, $\ V_0=\bC{\bf 1},\quad V_1=0$;
\item[(2)] $(\Ker \D)\cap V_{2}=0$, where $\D$ is the linear operator on $V$ defined by $\D v= v_{-2}{\bf 1}$;
\item[(3)]
$ V={\rm span}\{a^{(1)}_{-m_1}\cdots a^{(r)}_{-m_r}{\bf 1}\ | \
r\geq 0, \ a^{(i)}\in V_2,\ m_i\geq 1\}. $
\end{itemize}
Then there exist a bilinear operation $*$ on $V_2$, uniquely determined by
$$b_{0}a=\D(a*b)\quad\text{for}\ a,b\in V_2,$$
and a bilinear form $\lan\cdot, \cdot\ran$ on $V_2$, uniquely determined by
$$\quad a_3b=\frac{1}{2}\lan a,b\ran{\bf 1} \quad\text{for}\ a,b\in V_2.$$
Furthermore, $(V_2,*)$ is a Novikov algebra  and  $\lan\cdot,\cdot\ran$ is a
symmetric form satisfying (\ref{In}).
\end{pro}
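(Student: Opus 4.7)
The plan is to define $*$ and $\langle\cdot,\cdot\rangle$ directly from the grading, recognize the mode commutator as Primc's bracket (\ref{EC}), and transfer the automatic Jacobi identity in $\End V$ to the abstract bracket so as to invoke Proposition \ref{LC1}.

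First I would pin down the two operations. For $a, b \in V_2$, the $\bZ$-grading forces $a_n b \in V_{3-n}$, so hypothesis (1) kills $a_n b$ whenever $n = 2$ or $n \geq 4$ and places $a_3 b \in \bC{\bf 1}$; set $\langle a, b\rangle$ by $a_3 b = \tfrac{1}{2}\langle a, b\rangle {\bf 1}$. The skew-symmetry identity $a_n b = \sum_{i \geq 0}\tfrac{(-1)^{n+i+1}}{i!}\D^i(b_{n+i}a)$ specialized to $n = 3$ collapses (since $b_{3+i}a = 0$ for $i \geq 1$) to $a_3 b = b_3 a$, yielding $\langle a, b\rangle = \langle b, a\rangle$; at $n = 1$ it collapses (thanks to $b_2 a = 0$ and $\D\,{\bf 1} = 0$) to $a_1 b = b_1 a$. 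To define $*$, hypothesis (3) shows $V_3 = \D(V_2)$, since the only PBW monomial of total degree $3$ is $a_{-2}{\bf 1} = \D a$, and hypothesis (2) makes $\D|_{V_2}$ injective; there is thus a unique $a * b \in V_2$ with $b_0 a = \D(a * b)$, and a similar skew-symmetry computation yields $a_1 b = a*b + b*a$.

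Substituting these expressions into the standard commutator formula $[a_m, b_n] = \sum_{i \geq 0}\binom{m}{i}(a_i b)_{m+n-i}$, using $(\D v)_n = -n v_{n-1}$, and setting $L(a, m) := a_{m+1}$, I obtain
\begin{eqnarray*}
[L(a, m), L(b, n)] = (m+1)L(a*b, m+n) - (n+1)L(b*a, m+n) + \tfrac{m^3 - m}{12}\langle a, b\rangle \delta_{m+n, 0}\, 1_V,
\end{eqnarray*}
which is exactly the bracket of $\widetilde L(V_2, *)$ in (\ref{EC}), with the central element acting as $1_V$. The assignment $\rho\colon a \otimes t^{m+1} \mapsto a_{m+1}$, $\c \mapsto 1_V$ therefore extends to a bracket-preserving linear map $\widetilde L(V_2, *) \to \End V$.

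Finally I would transfer the operator Jacobi identity. A Jacobi sum of a triple $L(a, m), L(b, n), L(c, p)$, expanded through (\ref{EC}), has the form $y \otimes t^{m+n+p+1} + \mu \c$ for some $y \in V_2$ and $\mu \in \bC$ depending polynomially on $(m, n, p)$ and on $(a, b, c)$. Since $\rho$ preserves brackets and $\End V$ satisfies Jacobi, the image $y_{m+n+p+1} + \mu\, 1_V$ vanishes in $\End V$. Evaluating on ${\bf 1}$ and decomposing by grade forces $\mu = 0$ and $y_{m+n+p+1}{\bf 1} = 0$; for $m + n + p = -2$ this becomes $y = 0$ by the vacuum property, and for $m + n + p = -3$ it becomes $\D y = 0$, hence $y = 0$ by hypothesis (2). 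Extracting the coefficients of the monomials in $(m, n, p)$ at these two mode-sum values produces every Jacobi identity required by the abstract bracket (\ref{EC}); the ``only if'' direction of Proposition \ref{LC1} then delivers both the Novikov axioms for $(V_2, *)$ and the conditions (\ref{In}) on $\langle\cdot,\cdot\rangle$. The main technical obstacle is this extraction step: verifying that the identities available at mode sums $-2$ and $-3$, combined with their polynomial dependence on $(m, n, p)$, suffice to force the full Novikov and invariance relations rather than only partial consequences such as $\mathcal{A}(a,b,c) := (a*b)*c - (a*c)*b$ being totally antisymmetric.
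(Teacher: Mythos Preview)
Your route is essentially the paper's own: define $*$ and $\langle\cdot,\cdot\rangle$ from the grading and the skew--symmetry relation, rewrite the mode commutator as Primc's bracket (\ref{EC}), and then pull back the Jacobi identity from an ambient Lie algebra so as to invoke the ``only if'' direction of Proposition~\ref{LC1}.  The single structural difference is the choice of ambient Lie algebra: the paper maps $\widetilde L(V_2,*)$ into the Borcherds Lie algebra $\widetilde V/\tilde\D\widetilde V$ (with $\widetilde V=V\otimes\bC[t,t^{-1}]$ and $\tilde\D=\D\otimes 1+1\otimes d/dt$) rather than into $\End V$.  Since the natural map $\widetilde V/\tilde\D\widetilde V\to\End V$, $\overline{v\otimes t^m}\mapsto v_m$, carries the paper's embedding to your $\rho$, the two arguments are parallel; the Borcherds quotient has the mild advantage that it is \emph{a priori} a Lie algebra, whereas you have to observe that $\End V$ is associative.

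The ``main technical obstacle'' you flag --- whether vanishing at the two mode sums suffices to force the full Novikov axioms rather than only the total antisymmetry of $(a*b)*c-(a*c)*b$ --- is exactly the injectivity of $\widetilde L(V_2,*)\to\widetilde V/\tilde\D\widetilde V$.  The paper does not work through a coefficient extraction; it simply asserts in one line that ``from this and the assumption~(2), we see that $\widetilde L(V_2,*)$ is a subalgebra of $\widetilde V/\tilde\D\widetilde V$'', and then applies Proposition~\ref{LC1}.  So the step you single out as delicate is precisely the step the paper treats as a given; your worry is legitimate, but it is not a defect of your approach relative to the paper's --- both arguments rest on the same injectivity claim, and neither supplies more detail than the other.
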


\begin{proof} As $V=\oplus_{n\in \bZ}V_{n}$ is a $\bZ$-graded vertex algebra, we have
$$
a_0b\in V_3,\quad a_1b\in V_2,\quad a_2b\in V_1=0, \quad a_3b \in
V_0=\bC{\bf 1},\quad a_nb=0
$$
for $a,b\in V_2$ and $n\geq 4$. From the span property, we have
$$
V_3=\{a_{-2}{\bf 1}\,|\,a\in V_2\}=\D V_2.
$$
 Since $(\Ker \D)\cap V_{2}=0$, it follows that $*$
is a well-defined operation on $V_2$. Moreover, using the
skew-symmetry of $V$, we get
$$b_0a= -a_0b+\D(b_1a),\quad a_3b=b_3a\quad \text{for }\ a,b\in V_2.
$$
It then follows that
\begin{eqnarray}
a*b+b*a=b_{1}a=a_{1}b,\quad \lan a,b\ran=\lan b,a\ran\quad \text{for
}\ a,b\in V_2.
\end{eqnarray}
Set $\widetilde{V}=V\otimes \bC[t,t^{-1}]$ and $\tilde{\D}=\D\otimes
1+1\otimes \frac{d}{dt}$. It was known that $\widetilde{V}/\tilde{\D}\widetilde{V}$ is a Lie algebra
where for $u,v\in V,\ m,n\in \bZ$,
$$[\overline{u\otimes t^{m}},\overline{v\otimes t^{n}}]
=\sum_{j\ge 0}\binom{m}{j}\overline{u_{j}v\otimes t^{m+n-j}}.$$
For $a,b\in V_2,\ m,n\in \bZ$, we have
\begin{eqnarray*}
&&[a_m,b_n]\\
&=&\sum_{i\geq0}\binom{m}{i} (a_ib)_{m+n-i}\\
&=&(a_0b)_{m+n}+m(a_1b)_{m+n-1}+\frac{1}{6}m(m-1)(m-2)(a_3b)_{m+n-3}\\
&=&(\D(b*a))_{m+n}+m(a*b+b*a)_{m+n-1}+\frac{1}{6}m(m-1)(m-2)(a_3b)_{m+n-3}\\
&=&-(m+n)(b*a)_{m+n-1}+m(a*b+b*a)_{m+n-1}+\frac{1}{6}m(m-1)(m-2)(a_3b)_{m+n-3}\\
&=&m(a*b)_{m+n-1}-n(b*a)_{m+n-1}+\frac{1}{12}m(m-1)(m-2)\lan a,b\ran {\bf 1}_{m+n-3}\\
&=&m(a*b)_{m+n-1}-n(b*a)_{m+n-1}+\frac{1}{12}m(m-1)(m-2)\lan a,b\ran
\de_{m+n,2}.
\end{eqnarray*}
From this and the assumption (2), we see that the non-associative algebra
$\widetilde{L}(V_{2},*)$ defined in Proposition \ref{LC1} is a
subalgebra of $\widetilde{V}/\tilde{\D}\widetilde{V}$. In view of
Proposition \ref{LC1}, $(V_2,*)$ is a Novikov algebra and
$\lan\cdot,\cdot\ran$ is a symmetric bilinear form satisfying
(\ref{In}).
\end{proof}

\begin{rem}
{\em Let $V$ be a vertex algebra. Suppose $v\in \Ker \D\subset V$.
Then
$$\frac{d}{dx}Y(v,x)=Y(\D v,x)=0,$$
which implies that $v_{n}=0$ for all $n\ne -1$. Consequently, $v$
lies in the center of $V$ and $[\D, v_{-1}]=v_{-2}=0$. It then follows that $v_{-1}V$ is an ideal of
$V$. If $V$ is $\bN$-graded and if $v\in (\Ker \D)\cap V_{n}$ with
$n\ge 1$, then $v_{-1}V$ is a proper ideal. Thus, if $V$ is a
graded simple $\bN$-graded vertex algebra, we have $(\Ker
\D)\cap V_{n}=0$ for all $n\ge 1$.}
\end{rem}

\begin{rem}
{\em  The bilinear operation $*$ on $V_2$ was used
by Dijkgraaf in \cite{Di} in his study on the genus one partition function,
which is controlled by a contact term pre-Lie algebra
given in terms of the operator product expansion.}
\end{rem}

We next discuss quasi vertex operator algebras, or namely M\"obius vertex algebras.
Fix a basis $\{L(1),L(0),L(-1)\}$ for
$sl(2,\bC)$ such that
$$[L(0),L(\pm 1)]=\mp L(\pm 1),\ \ \ \ [L(1),L(-1)]=2L(0).$$
 A {\em M\"obius vertex algebra} (see \cite{FHL}) is a
$\bZ$-graded vertex algebra $V=\oplus_{n\in \bZ}V_{n}$, equipped
with a representation of $sl(2,\bC)$ on $V$ such
that $V_{n}=0$ for $n$ sufficiently negative,
$$ L(0)|_{V_{n}}=n\ \
\mbox{ for }n\in \bZ,$$ and
\begin{eqnarray}
&&[L(-1),Y(v,x)]=Y(L(-1)v,x)=\frac{d}{dx}Y(v,x),\\
&&[L(1),Y(v,x)]=Y(L(1)v,x)+2xY(L(0)v,x)+x^2Y(L(-1)v,x)
\end{eqnarray}
for $v\in V$. Note that for a M\"obius vertex algebra $V$, we have
$L(-1)=\D$ on $V$.

We have (cf. \cite{HL}):

\begin{pro}\label{pcharacterization-qvoa}
Let $V=\oplus_{n\in \bZ}V_{n}$ be a M\"obius vertex algebra
satisfying all the conditions given in Proposition
\ref{pcharacterization}. Then $(V_{2},*)$ is a commutative and
associative algebra.
\end{pro}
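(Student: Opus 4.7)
The plan is to leverage the additional $sl_{2}$-action present in a M\"obius vertex algebra (which is absent in a general $\bZ$-graded vertex algebra) to force commutativity of $*$, and then to deduce associativity as a purely algebraic consequence of commutativity together with the Novikov axioms already established in Proposition \ref{pcharacterization}.

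First, I would exploit the grading constraint $V_{1}=0$: since $L(1)$ lowers weight by $1$, it annihilates $V_{2}$. Combined with the defining relation
$$[L(1),Y(a,x)]=Y(L(1)a,x)+2xY(L(0)a,x)+x^{2}Y(L(-1)a,x),$$
specialized at $a\in V_{2}$ (where $L(1)a=0$, $L(0)a=2a$, $L(-1)a=\D a$), I extract the bracket
$$[L(1),Y(a,x)]=4xY(a,x)+x^{2}Y(\D a,x),$$
and by reading off the coefficient of $x^{-n-1}$ I obtain $[L(1),a_{n}]=(2-n)a_{n+1}$ for every $a\in V_{2}$ and $n\in\bZ$; in particular $[L(1),a_{0}]=2a_{1}$.

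Next, fix $a,b\in V_{2}$ and compute $L(1)(b_{0}a)$ in two ways. On the one hand, using $L(1)b_{0}=[L(1),b_{0}]+b_{0}L(1)=2b_{1}+0$ and the identity $b_{1}a=a*b+b*a$ from Proposition \ref{pcharacterization},
$$L(1)(b_{0}a)=2b_{1}a=2\bigl(a*b+b*a\bigr).$$
On the other hand, by definition $b_{0}a=\D(a*b)=L(-1)(a*b)$, and since $a*b\in V_{2}$, the $sl_{2}$-relation $[L(1),L(-1)]=2L(0)$ together with $L(1)(a*b)=0$ gives
$$L(1)(b_{0}a)=L(1)L(-1)(a*b)=2L(0)(a*b)=4(a*b).$$
Equating the two expressions yields $4(a*b)=2(a*b+b*a)$, hence $a*b=b*a$. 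This is the commutativity claim.

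Finally, to get associativity from commutativity and the Novikov axioms (\ref{N1})--(\ref{N2}): for any $a,b,c\in V_{2}$, axiom (\ref{N2}) gives $(b*c)*a=(b*a)*c$, and then commutativity converts this into $a*(b*c)=(a*b)*c$. No further input is required. The main conceptual point, and the only step that could plausibly be missed, is identifying that the weight-$1$ vanishing forces $L(1)V_{2}=0$ and that this, coupled with $[L(1),L(-1)]=2L(0)$, is exactly what is needed to symmetrize the product; once this is seen, no calculation is harder than the ones already performed in the proof of Proposition \ref{pcharacterization}.
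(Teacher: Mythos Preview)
Your argument is correct and follows essentially the same route as the paper's proof: both derive $[L(1),a_{n}]=(2-n)a_{n+1}$ from the M\"obius commutation relation together with $L(1)V_{2}=0$, compute $L(1)(b_{0}a)$ in two ways using $L(1)L(-1)=2L(0)$ on $V_{2}$, and deduce associativity from commutativity plus the Novikov axioms. The only cosmetic difference is that you invoke the identity $b_{1}a=a*b+b*a$ from Proposition~\ref{pcharacterization} directly, whereas the paper instead first obtains $u*v=\frac{1}{2}v_{1}u$ and then appeals to skew symmetry to get $u_{1}v=v_{1}u$; the two variants are equivalent.
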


\begin{proof}
For $v\in V_{2}$, as $L(1)v\in V_{1}=0$, we have
\begin{eqnarray}\label{eL1L-1}
L(1)L(-1)v=L(-1)L(1)v+2L(0)v=4v.
\end{eqnarray}
This implies
$$(\ker L(-1)) \cap V_2=0.$$
On the other hand, for $v\in V_{2}$, from \cite{FHL} we have
\begin{eqnarray}\label{eL(1)vm-comm}
[L(1),v_m]=(-m+2)v_{m+1}\quad\text{for}\ m\in\bZ.
\end{eqnarray}
Let $u,v\in V_{2}$. Using  (\ref{eL1L-1}), the definition of
$u*v$, and (\ref{eL(1)vm-comm}), we get
\begin{eqnarray*}
4u*v=L(1)L(-1)(u*v) =L(1)(v_{0}u)=v_{0}L(1)u-2v_{1}u=2v_{1}u,
\end{eqnarray*}
which gives $u*v=\frac{1}{2}v_{1}u$.  On the other hand, using skew symmetry we get
$$u_{1}v=v_{1}u-L(-1)v_{2}u+\frac{1}{2}L(-1)^{2}v_{3}u+\cdots=v_{1}u.$$
Therefore $u*v=v*u$. This proves
that $(V_{2},*)$ is commutative. Consequently, $(V_{2},*)$ is
commutative and associative.
\end{proof}

Furthermore, we have:

\begin{pro}\label{MC}
Let $\A$ be a Novikov algebra equipped with
a symmetric bilinear form $\lan\cdot,\cdot\ran$  satisfying
(\ref{In}). Then for any $\ell\in \bC$, the $\bZ$-graded vertex algebra $V_{\widetilde
L(\A)}(\ell,0)$ has a compatible M\"obius vertex algebra
structure if and only if $\A$ is commutative and associative.
\end{pro}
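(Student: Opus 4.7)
The plan is to establish the two directions separately.

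For the forward direction ($\Rightarrow$), suppose $V_{\widetilde L(\A)}(\ell,0)$ carries a compatible M\"obius vertex algebra structure. I would verify the three hypotheses of Proposition \ref{pcharacterization}. The grading properties $V_n=0$ for $n<0$, $V_0=\bC{\bf 1}$, $V_1=0$, and the spanning condition all follow from the PBW decomposition $V_{\widetilde L(\A)}(\ell,0)=U(\widetilde L(\A)_-){\bf 1}$ together with $\deg L(a,m)=-m$; in particular, any putative degree-$1$ vector would be of the form $L(a,-1){\bf 1}$, but $L(a,-1)=a\otimes 1\in\widetilde L(\A)_+$ kills ${\bf 1}$. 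Condition (2), $(\Ker\D)\cap V_2=0$, follows from the M\"obius axioms: for $v\in V_2$, $L(1)v\in V_1=0$, so the relation $[L(1),L(-1)]=2L(0)$ reduces to $L(1)\D v=4v$, forcing $\D$ to be injective on $V_2$. Proposition \ref{pcharacterization-qvoa} then yields that $(V_2,*)$ is commutative and associative. Finally, comparing the defining formula $b_0a=\D(a*b)$ from Proposition \ref{pcharacterization} with the vertex algebra data $a_0b=\D(ba)$ in (\ref{eva-information}) (so $b_0a=\D(ab)$), the injectivity of $\D|_{V_2}$ yields $a*b=ab$. Therefore $\A$ itself is commutative and associative.

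For the converse direction ($\Leftarrow$), assume $\A$ is commutative and associative. The operators $L(-1)=\D$ and $L(0)=\,$grading are forced; to construct $L(1)$, I would extend $\widetilde L(\A)$ to the Lie algebra $\hat L:=\widetilde L(\A)\oplus\bC L_1$ with defining relations
$$[L_1,L(a,m)]=(1-m)L(a,m+1),\qquad [L_1,\c]=0.$$
The crux is the Jacobi identity in $\hat L$: a direct calculation of
$$[L_1,[L(a,m),L(b,n)]]-[[L_1,L(a,m)],L(b,n)]-[L(a,m),[L_1,L(b,n)]]$$
reduces to $2L(ba-ab,m+n+1)$, with the central parts cancelling identically when $m+n+1=0$. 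Hence the Jacobi identity is equivalent to commutativity of $\A$, which is our hypothesis. Next, $\hat L_+:=\widetilde L(\A)_+\oplus\bC L_1$ is a Lie subalgebra (since $[L_1,L(a,m)]=(1-m)L(a,m+1)$ stays inside $\widetilde L(\A)_+$ whenever $m\ge-1$); declaring $L_1\cdot 1=0$ on $\bC_\ell$, the induction $U(\hat L)\otimes_{U(\hat L_+)}\bC_\ell$ is identified with $V_{\widetilde L(\A)}(\ell,0)$ as a vector space. This produces an operator $L(1):=L_1$ on $V_{\widetilde L(\A)}(\ell,0)$ with $L(1){\bf 1}=0$.

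It remains to check the M\"obius axioms. The relations $[L(0),L(\pm 1)]=\mp L(\pm 1)$ are immediate from operator degrees. For $[L(1),L(-1)]=2L(0)$, I would show that $[L(1),\D]-2L(0)$ kills ${\bf 1}$ and commutes with every $L(a,n)$ via the formulas $[L(1),L(a,n)]=(1-n)L(a,n+1)$ and $[\D,L(a,n)]=-(n+1)L(a,n-1)$, yielding $[[L(1),\D],L(a,n)]=-2nL(a,n)=2[L(0),L(a,n)]$. Finally the conformal compatibility
$$[L(1),Y(v,x)]=Y(L(1)v,x)+2xY(L(0)v,x)+x^2Y(L(-1)v,x)$$
is verified for $v=a\in\A$, where it collapses to $[L(1),L(a,m)]=(1-m)L(a,m+1)$ (since $L(1)a\in V_1=0$), and extended to all $v\in V$ by the standard vertex algebra induction using weak associativity.

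The main obstacle is the Jacobi identity in the construction of $L(1)$: this calculation pinpoints commutativity as the necessary and sufficient condition on the Novikov algebra $\A$. Associativity is then automatic, since a commutative Novikov algebra is associative via axioms (\ref{N1}) and (\ref{N2}). The remaining M\"obius-axiom verifications reduce to routine calculations once $L(1)$ is established.
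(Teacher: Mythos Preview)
Your proposal is correct and follows essentially the same route as the paper. For the ``only if'' direction both you and the paper invoke Proposition~\ref{pcharacterization-qvoa} (the paper has already recorded the hypotheses of Proposition~\ref{pcharacterization} for $V_{\widetilde L(\A)}(\ell,0)$ just before that proposition, via PBW). For the ``if'' direction the paper packages your construction of $L_{1}$ as Lemma~C, showing directly that all of $sl_{2}$ acts on $\widetilde{L}(\A)$ by derivations (your Jacobi computation for $\hat L$ is exactly the statement that $L_{1}$ is a derivation), then passes to the induced module and applies Lemma~B, which is precisely the ``standard vertex algebra induction using weak associativity'' you allude to at the end.
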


\begin{proof} The ``only if'' part follows from Proposition \ref{pcharacterization-qvoa}.
For the ``if'' part we assume that $\A$ is a commutative and associative algebra with
a symmetric bilinear form $\lan\cdot,\cdot\ran$  satisfying
(\ref{In}).
 First, by Lemma C, $sl_{2}$ acts on $\widetilde{L}(\A)$ by derivations, where $sl_{2}\cdot \c=0$ and
 \begin{eqnarray*}
L(-1+j)(a\otimes t^{n})=(j-n)(a\otimes t^{n+j-1})
\end{eqnarray*}
for $j=0,1,2$ and for $a\in \A,\ n\in \bZ$.
Then $sl_{2}$ acts on the universal enveloping algebra $U(\widetilde{L}(\A))$ as a Lie algebra of derivations. We see that the action of $sl_{2}$ preserves the subalgebra $\widetilde{L}(\A)_{+}$.
It follows from the construction of $V_{\widetilde
L(\A)}(\ell,0)$ that $sl_{2}$ acts on $V_{\widetilde
L(\A)}(\ell,0)$ with $sl_{2}\cdot {\bf 1}=0$. For $a\in \A$, we have
\begin{eqnarray*}
&&[L(-1),Y(a,x)]=\frac{d}{dx}Y(a,x),\\
&&[L(0),Y(a,x)]=x\frac{d}{dx}Y(a,x)+2Y(a,x),\\
&&[L(1),Y(a,x)]=4xY(a,x)+x^{2}\frac{d}{dx}Y(a,x).
\end{eqnarray*}
Then by Lemma B (in Appendix) $V_{\widetilde
L(\A)}(\ell,0)$ is a M\"obius vertex algebra.
\end{proof}

\begin{rem}
{\rm Let $\A=\bC e_1\oplus \bC e_2$ with a multiplicative operation $\circ$ given by
$$
e_1\circ e_1= e_1+e_2,\quad e_2\circ e_1=e_2,\quad e_1\circ
e_2=e_2\circ e_2=0.
$$
This is a noncommutative and nonassociative Novikov algebra.
Furthermore, the bilinear form $\lan\cdot,\cdot\ran$, defined by
$$\lan e_1,e_1\ran=\frac{1}{12},\quad \lan e_1,e_2\ran=\lan
e_2,e_1\ran=\lan e_2,e_2\ran=0,$$
is (degenerate) symmetric and satisfies (\ref{In}).
In view of Proposition \ref{MC}, $V_{\widetilde L(\A)}(\ell,0)$
is not a M\"obius vertex algebra.
The corresponding Lie algebra $L(\A)$ has been
extensively studied in \cite{PB3}.}
\end{rem}

Next we study $\phi_\ep$-coordinated modules for vertex algebra $V_{\widetilde{L}(\A)}(\ell,0)$.
First, we construct certain infinite-dimensional Lie algebras, generalizing Lie algebra $\widetilde{L}(\A)$.

\begin{lem}\label{LN}
Let $\A$ be a Novikov algebra and let $\K$ be a commutative and associative algebra with a derivation $\partial$. Define a bilinear operation $[\cdot,\cdot]$ on $\A\ot\K$ by
\begin{equation}\label{def2}
[a\ot f, b\ot g]= ab \ot (\pa f)g- ba \ot (\pa g)f
\end{equation}
for $a,b\in \A,\  f,g\in \K$. Then
$(\A\ot\K,[\cdot,\cdot])$ is a Lie algebra.
\end{lem}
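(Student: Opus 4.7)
Antisymmetry is immediate: interchanging $(a\otimes f)$ and $(b\otimes g)$ in the definition gives
$[b\otimes g,a\otimes f]=ba\otimes(\partial g)f-ab\otimes(\partial f)g=-[a\otimes f,b\otimes g]$,
so I need only verify the Jacobi identity. Note that this lemma is a direct generalization of Proposition~\ref{BN} (the case $\K=\bC[t,t^{-1}]$ with $\partial=d/dt$), and the plan is to run the same calculation, isolating precisely where commutativity and associativity of $\K$, the Leibniz rule for $\partial$, and each Novikov axiom enter.

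The first step is to expand $[[a\otimes f,b\otimes g],c\otimes h]$ by applying the bracket twice and using $\partial((\partial f)g)=(\partial^{2}f)g+(\partial f)(\partial g)$ together with the commutativity of $\K$ to put every $\K$-factor into one of two standard shapes: type~(A) monomials of the form $(\partial^{2}\alpha)\beta\gamma$ and type~(B) monomials of the form $(\partial\alpha)(\partial\beta)\gamma$, where $\{\alpha,\beta,\gamma\}=\{f,g,h\}$. Then I would form the cyclic sum over the permutation $(a,f)\mapsto(b,g)\mapsto(c,h)\mapsto(a,f)$ and collect the coefficient (an element of $\A$) attached to each distinct $\K$-monomial.

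The second step is to check that each coefficient vanishes by a Novikov axiom. For a type~(A) monomial such as $(\partial^{2}f)gh$, only two of the three cyclic terms contribute, and the resulting coefficient collapses to $(ab)c-(ac)b$, which vanishes by axiom (\ref{N2}); the analogous combinations for $(\partial^{2}g)fh$ and $(\partial^{2}h)fg$ likewise reduce to (\ref{N2}). For a type~(B) monomial such as $(\partial f)(\partial g)h$, all three cyclic terms contribute, and, after using commutativity of $\K$ to identify $(\partial\alpha)(\partial\beta)\gamma$ with $(\partial\beta)(\partial\alpha)\gamma$, the coefficient becomes $(ab)c-(ba)c-a(bc)+b(ac)$, which is exactly the left Novikov identity (\ref{N1}) and hence vanishes; the coefficients of $(\partial g)(\partial h)f$ and $(\partial h)(\partial f)g$ are handled identically by cyclic symmetry.

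The only real obstacle is bookkeeping: the cyclic sum produces eighteen terms, and one must consistently track both the $\A$-product (which is sensitive to the order of the factors) and the $\K$-monomial (which, by commutativity, is symmetric in its factors) when relabeling under the cyclic shift. Once the terms are sorted by $\K$-monomial as above, the vanishing of every group is an immediate consequence of axioms (\ref{N1}) and (\ref{N2}); no hypothesis on $\K$ beyond commutativity, associativity, and the Leibniz rule for $\partial$ is used, which is why Proposition~\ref{BN} generalizes verbatim.
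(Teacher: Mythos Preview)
Your proof is correct: the antisymmetry is trivial, and your organization of the eighteen terms of the cyclic sum by the $\K$-monomials $(\partial^{2}\alpha)\beta\gamma$ and $(\partial\alpha)(\partial\beta)\gamma$ is exactly right; the coefficients do reduce to instances of axioms (\ref{N2}) and (\ref{N1}) respectively. This is precisely the ``straightforward'' direct verification that the paper alludes to in its first sentence, so in that sense you have taken the same approach.

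The paper, however, also records a genuinely different alternative: it observes that $(\K,*)$ with $f*g=(\partial f)g$ is a \emph{right} Novikov algebra, and then invokes the operad-theoretic fact that the operads governing left and right Novikov algebras are Koszul dual (so that $\A\otimes\K$ is automatically a Lie algebra by the Ginzburg--Kapranov result on tensor products over dual quadratic operads). Your computation is self-contained and uses nothing beyond the axioms, which is an advantage for readers unfamiliar with operads; the paper's alternative, on the other hand, explains \emph{why} the Jacobi identity had to work out without any term-by-term bookkeeping, and places the lemma in a broader conceptual framework.
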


\begin{proof}
It is straightforward. Alternatively, it follows from a general result in algebraic operad theory (see \cite{LV} and \cite{Za}) as follows:
First, define a new operation $*$ on $\K$ by
 $$f*g=(\pa f) g\quad\text{for}\ f,g\in \K.$$
Then $(\K,*)$ is a right Novikov algebra. Note that from \cite{Dr} (Theorem 1.3),
left Novikov algebras and right Novikov algebras are algebras over binary quadratic operads dual to each other. It follows from \cite{GK} (Theorem 2.2.6 (b)) that $(\A\ot\K,[\cdot,\cdot])$ is a Lie algebra.
\end{proof}

In view of Lemma \ref{LN}, for any Novikov algebra $\A$ and for any integer $\ep$ we have a Lie algebra
$L^\ep(\A)$, where
\begin{equation}
L^{\ep}(\A)=\A\otimes \bC[t,t^{-1}]
\end{equation}
as a vector space and the bilinear operation $[\cdot,\cdot]$ is given by
\begin{equation}\label{def}
[a\ot f, b\ot g]= ab \ot \left(t^{\ep}\frac{d}{dt}f\right)g- ba \ot \left(t^{\ep}\frac{d}{dt}g\right)f
\end{equation}
for $a,b\in \A,\  f,g\in \bC[t,t^{-1}]$.

\begin{rem}
{\em Assume that $\A$ is a commutative Novikov algebra. It can be readily seen that
the linear map $\theta: L(\A)\to L^{\ep}(\A)$ defined by $\theta(L(a,m))=L^\ep(a,m)$ for $a\in\A, m\in\bZ$ is an isomorphism of Lie algebras. Furthermore, if $\A$ is unital, that is, $\A$ is a unital commutative and associative algebra, one can show that $\widetilde{L}(\A)\simeq \widetilde{L}^{\ep}(\A)$.}
\end{rem}

\begin{rem}
{\rm Let $\A=\bC[z,z^{-1}]$ with a derivation $z\frac{d}{dz}$. Define
$$z^i\circ z^j=z^i(z\frac{d}{dz})(z^j)= j z^{i+j}\quad \text{for}\ i,j\in\bZ.$$
Then we have a Novikov algebra $(\A,\circ)$. Furthermore, for any $\ep\in \bZ$
we have a Lie algebra $L^{\ep}(\A)$.
Denote $L^{\ep}(i,m)=L^{\ep}(z^i,m)\in L^{\ep}(\A)$ for $i,m\in\bZ$.
Then
$$[L^\ep(i,m),L^\ep(j,n)]=(j(m+1-\ep)-i(n+1-\ep))L^\ep(i+j,m+n)\quad\text{for}\ i,j,m,n\in\bZ.
$$
Note that $L^{0}(\A)$ is isomorphic to the poisson  Poisson Lie algebra defined as in (\ref{Poisson}).
On the other hand, $L^{1}(\A)$ is isomorphic to the Lie algebra of area-preserving diffeomorphisms of
the two-torus investigated by V. Arnold in \cite{Ar},
which is generated by $L_{m}^{i}$ with $m,i\in \bZ$, subject to relations
\begin{eqnarray}
[L_m^i,L_n^j]=(jm-in)L_{m+n}^{i+j}\quad\text{for}\ i,j,m,n\in\bZ.
\end{eqnarray}
It was also called the Virasoro-like algebra in \cite{KPS}.
Note that from \cite{DZ}, Lie algebra $L^{1}(\A)$ is {\em not} isomorphic to $L(\A)$.}
\end{rem}

\begin{pro} Let $\A$ be a Novikov algebra with a symmetric bilinear
form $\lan\cdot, \cdot\ran$ satisfying
(\ref{In}). Set
\begin{eqnarray}
\widetilde L^\ep(\A)=L^\ep(\A)\oplus \bC \c_\ep,
\end{eqnarray}
where $\c_\ep$ is a nonzero element.
For $a\in\A,\ m\in\bZ$, denote $L^\ep(a,m)=a\ot t^{m+1-\ep}$.
Then $\widetilde L^\ep(\A)$ is a Lie algebra with
\begin{eqnarray}\label{EC1}
&&[L^\ep(a,m),L^\ep(b,n)]\nonumber\\
&=&(m+1-\ep)L^\ep(ab, m+n)-(n+1-\ep)L^\ep(ba, m+n)\nonumber\\
&&+\frac{1}{12}(m+1-\ep)m(m-1+\ep)\lan a,b\ran \de_{m+n,0}\c_\ep
\end{eqnarray}
for $a,b\in\A$, $m,n\in\bZ$, and with $\c_\ep$ central.
\end{pro}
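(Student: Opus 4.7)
The plan is to realize $\widetilde{L}^\ep(\A)$ as a one-dimensional central extension of $L^\ep(\A)$ via the $2$-cochain
$$\psi\bigl(L^\ep(a,m),\, L^\ep(b,n)\bigr) = \tfrac{1}{12}(m+1-\ep)\,m\,(m-1+\ep)\,\lan a,b\ran\,\delta_{m+n,0}\,\c_\ep,$$
which for $\ep = 0$ recovers Primc's cocycle in Proposition \ref{LC1}.

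First, I would apply Lemma \ref{LN} to the commutative associative algebra $\K = \bC[t,t^{-1}]$ with derivation $\partial = t^\ep\tfrac{d}{dt}$; this gives that $L^\ep(\A) = \A\ot\bC[t,t^{-1}]$ with bracket (\ref{def}) is a Lie algebra. Using $\partial(t^{m+1-\ep}) = (m+1-\ep)t^m$ together with $L^\ep(a,m) = a\ot t^{m+1-\ep}$, this bracket rewrites as the non-central part of (\ref{EC1}).

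Next I would verify that $\psi$ is a $2$-cocycle. Skew-symmetry reduces, on the support $m+n=0$, to $(-m+1-\ep)(-m)(-m-1+\ep) = -(m+1-\ep)m(m-1+\ep)$ combined with $\lan b,a\ran = \lan a,b\ran$. The structural observation behind the cocycle (Jacobi) identity is that the two invariance conditions in (\ref{In}), together with the symmetry of $\lan\cdot,\cdot\ran$, force the trilinear form $\sigma(a,b,c) := \lan ab,c\ran$ to be totally symmetric: symmetry in $a,b$ is the second condition, while $\sigma(b,c,a) = \lan bc,a\ran = \lan a,bc\ran = \lan ab,c\ran = \sigma(a,b,c)$ combines the first condition with the symmetry of the form. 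Using this, the cyclic sum $\psi([x,y],z)+\psi([y,z],x)+\psi([z,x],y)$ for $x=L^\ep(a,l)$, $y=L^\ep(b,m)$, $z=L^\ep(c,n)$ collapses, on $l+m+n=0$, to $-\tfrac{1}{12}\sigma(a,b,c)\,\c_\ep$ times
$$(l-m)\,p(n) + (m-n)\,p(l) + (n-l)\,p(m),$$
where $p(k) := (k+1-\ep)k(k-1+\ep) = k^3 - (1-\ep)^2 k$. The linear part $-(1-\ep)^2\bigl[(l-m)n + (m-n)l + (n-l)m\bigr]$ is identically zero, and the cubic part $(l-m)n^3 + (m-n)l^3 + (n-l)m^3$ factors as $-(l+m+n)(l-m)(m-n)(n-l)$ (each of the three linear factors on the right divides the LHS, the quotient is symmetric of degree one, and a single numerical check fixes the overall constant), hence vanishes on $l+m+n=0$. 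Standard central-extension theory then delivers the Lie algebra structure on $\widetilde{L}^\ep(\A)$ with bracket (\ref{EC1}).

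The main obstacle is purely bookkeeping: keeping the sign flip $p(-n) = -p(n)$ consistent when substituting $l+m = -n$, and correctly applying the total symmetry of $\sigma$ when cyclically permuting the three arguments. Once $\sigma$ is recognized as a symmetric trilinear form, the scalar computation collapses to the classical antisymmetric-polynomial identity and the verification is essentially mechanical.
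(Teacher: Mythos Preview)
Your proposal is correct and follows essentially the same approach as the paper: both realize $\widetilde L^\ep(\A)$ as a central extension of $L^\ep(\A)$ by verifying that the scalar part of (\ref{EC1}) is a skew-symmetric $2$-cocycle, using oddness of $p(k)=k(k^2-(1-\ep)^2)$ for skew-symmetry and the total symmetry of $\sigma(a,b,c)=\lan ab,c\ran$ (extracted from (\ref{In})) for the Jacobi/cocycle identity. The only cosmetic difference is in dispatching the final scalar identity: the paper substitutes $(m+n)^2=k^2$ to rewrite each cyclic term as $(m^2-n^2)(k^2-(1-\ep)^2)$ and observes the cyclic sum telescopes identically, whereas you keep the form $(l-m)p(n)+\text{cyc}$ and factor the cubic contribution as $-(l+m+n)(l-m)(m-n)(n-l)$---both are equivalent one-line verifications.
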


\begin{proof}
Define a bilinear form $(\cdot,\cdot)$ on the Lie algebra $L^{\ep}(\A)$ by
$$\left(L^{\ep}(a,m),L^{\ep}(b,n)\right)
=\frac{1}{12}(m+1-\ep)m(m-1+\ep)\lan a,b\ran \de_{m+n,0}$$
for $a,b\in A,\ m,n\in \bZ$.
Notice that
$$(m+1-\ep)m(m-1+\ep)=m(m^{2}-(1-\ep)^{2}),$$
which is an odd function of $m$.
As $\lan\cdot,\cdot\ran$ is symmetric, $(\cdot,\cdot)$ is skew symmetric.

For cocycle condition, let $a,b,c\in\A$, $m,n,k\in\bZ$. We have
\begin{eqnarray*}
&&([L^\ep(a,m),L^\ep(b,n)],L^\ep(c,k))\\
&=&\frac{1}{12}(m+1-\ep)(m+n)\left((m+n)^{2}-(1-\ep)^{2}\right)\lan ab,c\ran \de_{m+n+k,0}\\
&&-\frac{1}{12}(n+1-\ep)(m+n)\left((m+n)^{2}-(1-\ep)^{2}\right)\lan ba,c\ran \de_{m+n+k,0}\\
&=&\frac{1}{12}(m-n)(m+n)\left((m+n)^{2}-(1-\ep)^{2}\right)\lan ab,c\ran \de_{m+n+k,0}\\
&=&\frac{1}{12}(m^{2}-n^{2})\left(k^{2}-(1-\ep)^{2}\right)\lan ab,c\ran \de_{m+n+k,0},
\end{eqnarray*}
where we used the property $\lan ab,c\ran=\lan ba,c\ran$.
Furthermore, we have
$$\lan bc,a\ran=\lan a,bc\ran=\lan ab,c\ran,\ \ \
\lan ca,b\ran=\lan c,ab\ran=\lan ab,c\ran,$$
and
$$(m^{2}-n^{2})\left(k^{2}-(1-\ep)^{2}\right)+(n^{2}-k^{2})\left(m^{2}-(1-\ep)^{2}\right)
+(k^{2}-m^{2})\left(n^{2}-(1-\ep)^{2}\right)=0.$$
Then the cocycle condition follows immediately.
Therefore, $\widetilde L^\ep(\A)$ is a Lie algebra.
\end{proof}

Note that
$$\widetilde L^0(\A)=\widetilde L(\A).$$

For $a\in \A$, set
\begin{equation}
L^{\ep}(a,x)=\sum_{n\in\bZ} L^{\ep}(a,n)x^{-n-2+2\ep}\in \widetilde{L^\ep}(\A)[[x,x^{-1}]].
\end{equation}
In terms of generating functions the relation (\ref{EC1}) can be written as
\begin{eqnarray}\label{GR2}
&&[L^{\ep}(a,x_1),L^{\ep}(b,x_2)]\nonumber\\
&=&\left(x_2^\ep\frac{\pa}{\pa x_2}L^\ep(b a,x_2)\right)x_1^{-1+\ep}\de\left(\frac{x_2}{x_1}\right)\nonumber\\
&&+(L^\ep(ab,x_2)+L^\ep(ba, x_2))\left(x_2^{\ep}\frac{\pa}{\pa x_2}\right)x_{1}^{-1+\ep}\de\left(\frac{x_2}{x_1}\right)\nonumber\\
&&+\frac{1}{12}\lan a,b\ran\c_\ep \left(x_2^{\ep}\frac{\pa}{\pa x_2}\right)^3x_{1}^{-1+\ep}\de\left(\frac{x_2}{x_1}\right)
\end{eqnarray}
for $a,b\in \A$.

\begin{defi}
{\em An $\widetilde L^\ep(\A)$-module on which $\c_\ep$ acts as a scalar $\ell\in\bC$ is said to be of {\em level} $\ell$.
An $\widetilde L^\ep(\A)$-module $W$ is said to be {\em restricted} if $L^\ep(a,x)w\in W((x))$
for every $a\in \A,\ w\in W$.
Denote by $L_W^\ep(a,x)$ the corresponding element of $\E(W)$.}
\end{defi}

As the main result of this section we have:

\begin{theo}
Let $\ell\in \bC$ and
let $W$ be a restricted $\widetilde L^\ep(\A)$-module of level $\ell$. Then there exists a
$\phi_\ep$-coordinated  $V_{\widetilde L(\A)}(\ell,0)$-module structure on $W$, which is uniquely
determined by $Y_W(a,x)=L_W^\ep(a,x)\ \text{for}\ a\in\A.$  On the other hand,  let $(W, Y_W)$ be a $\phi_\ep$-coordinated $V_{\widetilde L(\A)}(\ell,0)$-module. Then $W$ is a restricted $\widetilde L^\ep(\A)$-module of level $\ell$, which is given by $L_W^\ep(a,x)=Y_W(a,x)$ for $a\in\A.$
\end{theo}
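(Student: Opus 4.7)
The plan is to prove the two implications separately, pivoting on the commutator formula (\ref{Com}) for $\phi_\ep$-coordinated modules and the explicit $n$-th product information (\ref{eva-information}) for $V_{\widetilde L(\A)}(\ell,0)$.

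For the (easier) second assertion, suppose $(W,Y_W)$ is a $\phi_\ep$-coordinated $V_{\widetilde L(\A)}(\ell,0)$-module and set $L_W^\ep(a,x):=Y_W(a,x)$ for $a\in\A$. I would first derive the $\D$-derivation identity $Y_W(\D v,x)=x^\ep\frac{d}{dx}Y_W(v,x)$: applying the $\phi_\ep$-coordinated weak associativity to the creation identity $Y(u,x_0){\bf 1}=e^{x_0\D}u$ yields $Y_W(e^{x_0\D}u,x_2)=Y_W(u,\phi_\ep(x_2,x_0))=e^{x_0x_2^\ep\partial_{x_2}}Y_W(u,x_2)$, and differentiating at $x_0=0$ (using $\partial_{x_0}\phi_\ep(x_2,x_0)|_{x_0=0}=x_2^\ep$) gives the identity. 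Substituting the values $a_0b=\D(ba)$, $a_1b=ab+ba$, $a_2b=0$, $a_3b=\tfrac{\ell}{2}\lan a,b\ran{\bf 1}$, $a_jb=0$ for $j\ge 4$ from (\ref{eva-information}) into (\ref{Com}), and rewriting the $j=0$ term via the $\D$-derivation identity, reproduces (\ref{GR2}) term by term. Restrictedness is automatic from $Y_W(a,x)\in\Hom(W,W((x)))$.

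For the main (first) assertion, let $W$ be a restricted $\widetilde L^\ep(\A)$-module of level $\ell$ and set $U=\{L_W^\ep(a,x)\mid a\in\A\}\cup\{1_W\}\subset\E(W)$. Combining (\ref{GR2}) with the annihilation formulas $(x_1-x_2)^{j+1}(x_2^\ep\partial_{x_2})^jx_1^{\ep-1}\de(x_2/x_1)=0$ implicit in Remark \ref{basic-facts}, I would get $(x_1-x_2)^4[L_W^\ep(a,x_1),L_W^\ep(b,x_2)]=0$, so $U$ is local in $\E(W)$. By Theorem \ref{GT}, $V:=\lan U\ran_{\phi_\ep}$ is then a vertex algebra and $W$ is a $\phi_\ep$-coordinated $V$-module with $Y_W(\alpha(x),z)=\alpha(z)$, which is tautologically faithful. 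A direct computation from the definition of $Y_\E^\ep$ gives $Y_\E^\ep(\alpha(x),z)1_W=\alpha(\phi_\ep(x,z))$, whence the internal operator $\D$ of $V$ satisfies $\D\alpha(x)=x^\ep\frac{d}{dx}\alpha(x)$.

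With these preparations in hand, Lemma \ref{faith} applied to the known commutator (\ref{GR2}), matched term by term against (\ref{Com}), identifies all nonzero $n$-th products among the generators of $V$:
\begin{eqnarray*}
&&(L_W^\ep(a,x))_0^\ep L_W^\ep(b,x)=\D L_W^\ep(ba,x),\qquad (L_W^\ep(a,x))_1^\ep L_W^\ep(b,x)=L_W^\ep(ab+ba,x),\\
&&(L_W^\ep(a,x))_3^\ep L_W^\ep(b,x)=\tfrac{\ell}{2}\lan a,b\ran 1_W,
\end{eqnarray*}
with all other products zero. These coincide precisely with the relations (\ref{eva-information}), so by the universal property of the induced vertex algebra $V_{\widetilde L(\A)}(\ell,0)$, the assignment $a\mapsto L_W^\ep(a,x)$ extends uniquely to a vertex algebra homomorphism $\pi:V_{\widetilde L(\A)}(\ell,0)\to V$. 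Composing $\pi$ with the $\phi_\ep$-coordinated $V$-action on $W$ produces the required $\phi_\ep$-coordinated $V_{\widetilde L(\A)}(\ell,0)$-module structure satisfying $Y_W(a,x)=L_W^\ep(a,x)$. The principal technical obstacle I foresee is the bookkeeping in verifying locality of $U$ and in matching the delta-function expansions from (\ref{GR2}) against (\ref{Com}) inside Lemma \ref{faith}; both rely on careful use of the $\delta$-function identities from Remark \ref{basic-facts}, while the concluding universality step is standard for vertex algebras of induced type.
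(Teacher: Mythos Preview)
Your proposal is correct and follows essentially the same route as the paper: establish locality of $U$ from (\ref{GR2}), invoke Theorem~\ref{GT} to get the vertex algebra $\langle U\rangle_{\phi_\ep}$ with $W$ as a faithful $\phi_\ep$-coordinated module, use Lemma~\ref{faith} to read off the $n$-th products among generators, and then transport the $\phi_\ep$-coordinated module structure along a vertex-algebra homomorphism from $V_{\widetilde L(\A)}(\ell,0)$; the converse direction via (\ref{Com}) and (\ref{eva-information}) is also the same.

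The only point worth flagging is the final step of the first assertion. You appeal directly to ``the universal property of the induced vertex algebra'' once the $n$-th products (\ref{eva-information}) are matched. The paper is slightly more explicit here: it feeds those $n$-th products into the ordinary Borcherds commutator formula (Theorem~\ref{Ja} with $\ep=0$, i.e.\ inside the vertex algebra $\langle U_W\rangle_{\phi_\ep}$ itself) to exhibit $\langle U_W\rangle_{\phi_\ep}$ as a restricted $\widetilde L(\A)$-module of level $\ell$ with $L(a,n){\bf 1}_W=0$ for $n\ge 0$, then invokes the universal property of the \emph{induced module} $V_{\widetilde L(\A)}(\ell,0)$ to produce an $\widetilde L(\A)$-module map $\rho$, and finally observes that $\rho$ is a vertex-algebra homomorphism because both sides are generated by $\A$. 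Your shortcut is legitimate provided you cite the vertex-Lie-algebra universal property underlying the Primc/DLM construction; the paper's route has the advantage of making clear exactly which universal property is being used and why the vacuum-annihilation condition holds.
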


\begin{proof}
Assume that $W$ is a restricted $\widetilde L^\ep(\A)$-module of level
$\ell$. Set
$$
U_W=\text{span}\{L_W^\ep(a,x)\mid a\in \A\}\subset \E(W).
$$
For $a,b\in \A$, from (\ref{GR}) we have
$$(x_1-x_2)^4[L_W^\ep(a,x_1),L_W^\ep(b,x_2)]=0.$$
Thus $U_W$ is local. By  Theorem \ref{GT}, $U_W$ generates a vertex algebra $\lan U_W\ran_{\phi_{\ep}}$ and $W$ is a
faithful $\phi_\ep$-coordinated $\lan U_W\ran_{\phi_{\ep}}$-module with
$$Y_{W}(\alpha(x),z)=\alpha(z)\ \ \mbox{ for }\alpha(x)\in \lan U_W\ran_{\phi_{\ep}}.$$
Using the commutation relation of $\widetilde L^\ep(\A)$ we have
\begin{eqnarray}
&&[Y_{W}\left(L_{W}^{\ep}(a,x),x_{1}\right),Y_{W}\left(L_{W}^{\ep}(b,x),x_{2}\right)]\nonumber\\
&=&[L_{W}^{\ep}(a,x_{1}),L_{W}^{\ep}(b,x_{2})]\nonumber\\
&=&\left(x_2^\ep\frac{\pa}{\pa x_2}L_{W}^\ep(b a,x_2)\right)x_1^{-1+\ep}\de\left(\frac{x_2}{x_1}\right)\nonumber\\
&&+(L_{W}^\ep(ab+ba, x_2))\left(x_2^{\ep}\frac{\pa}{\pa x_2}\right)x_{1}^{-1+\ep}\de\left(\frac{x_2}{x_1}\right)\nonumber\\
&&+\frac{1}{12}\lan a,b\ran\ell \left(x_2^{\ep}\frac{\pa}{\pa x_2}\right)^3x_{1}^{-1+\ep}\de\left(\frac{x_2}{x_1}\right)\nonumber\\
&=&\left(x_2^\ep\frac{\pa}{\pa x_2}Y_{W}\left(L_{W}^\ep(b a,x),x_2\right)\right)x_1^{-1+\ep}\de\left(\frac{x_2}{x_1}\right)\nonumber\\
&&+Y_{W}\left(L_{W}^\ep(ab+ba,x), x_2\right)\left(x_2^{\ep}\frac{\pa}{\pa x_2}\right)x_{1}^{-1+\ep}\de\left(\frac{x_2}{x_1}\right)\nonumber\\
&&+\frac{1}{12}\lan a,b\ran\ell \left(x_2^{\ep}\frac{\pa}{\pa x_2}\right)^3x_{1}^{-1+\ep}\de\left(\frac{x_2}{x_1}\right).
\end{eqnarray}
In view of Lemma \ref{faith}, we have
\begin{eqnarray*}
&&L_{W}^{\ep}(a,x)_{0}^{\ep}L_{W}^{\ep}(b,x)=\D L_{W}^{\ep}(ba,x),\ \ \ \
L_{W}^{\ep}(a,x)_{1}^{\ep}L_{W}^{\ep}(b,x)=L_{W}^{\ep}(ab+ba,x),\\
&&L_{W}^{\ep}(a,x)_{3}^{\ep}L_{W}^{\ep}(b,x)=\frac{1}{2}\ell \lan a,b\ran 1_{W},\ \ \ \
L_{W}^{\ep}(a,x)_{j}^{\ep}L_{W}^{\ep}(b,x)=0
\end{eqnarray*}
for $j=2$ and for $j\ge 4$.
Then by Theorem \ref{Ja} we have
\begin{eqnarray*}
&&[Y_{\E}^\ep(L_W^\ep(a,x),x_1),Y_{\E}^{\ep}(L_W^\ep(b,x),x_2)]\nonumber\\
&=&Y_{\E}^{\ep}\left(\D L_W^\ep(b a,x),x_2\right)x_1^{-1}\de\left(\frac{x_2}{x_1}\right)+Y_{\E}^{\ep}(L_W^\ep(ab+ba,x),x_2)\left(\frac{\pa}{\pa x_2}\right)x_1^{-1}\de\left(\frac{x_2}{x_1}\right)\\
&&+\frac{1}{12}\lan a,b\ran\ell\left(\frac{\pa }{\pa x_2}\right)^3x_1^{-1}\de\left(\frac{x_2}{x_1}\right)
\end{eqnarray*}
for $a,b\in\A$. This shows that $\lan  U_W\ran_{\phi_{\ep}}$ is an $\widetilde L(\A)$-module of level $\ell$ with $L(a,x_1)$ acting as $Y_\E^{\ep}(L^\ep_W(a,x),x_1)$ for $a\in \A$ and
\begin{eqnarray*}
&&L(a,n)1_W=L^\ep_W(a,x)_n^{\ep}1_W=0\quad\text{for}\  a\in \A,\ n\in \bN.
\end{eqnarray*}
From the construction of $V_{\widetilde L(\A)}(\ell,0)$, there exists an $\widetilde L(\A)$-module homomorphism  $\rho$ from
$V_{\widetilde L(\A)}(\ell,0)$ to $\lan  U_W\ran_{\phi_{\ep}}$ with
$\rho({\bf 1})=1_W$. That is,
$$
\rho(Y(a,x_1)v))=Y_{\E}^{\ep}(L_W^\ep(a,x),x_1)\rho(v)
\quad\text{for}\ a\in \A,\ v\in V_{\widetilde L(\A)}(\ell,0).
$$
Since the vertex algebra $\lan  U_W\ran_{\phi_{\ep}}$  is generated by $L_W^\ep(a,x)$ for $a\in \A$,   it follows that $\rho$ is a  homomorphism of vertex algebras. As $W$ is a $\phi_\ep$-coordinated module for
$\lan  U_W\ran_{\phi_{\ep}}$, $W$ is  a $\phi_\ep$-coordinated  $V_{\widetilde L(\A)}(\ell,0)$-module through homomorphism $\rho$. Therefore $W$ is a
$\phi_\ep$-coordinated  $V_{\widetilde L(\A)}(\ell,0)$-module.

On the other hand,  let $(W, Y_W)$ be a $\phi_\ep$-coordinated $V_{\widetilde L(\A)}(\ell,0)$-module.
 Using the relations (\ref{eva-information}) and the commutator formula (\ref{Com}) for $\phi_\ep$-coordinated modules for vertex algebras in Theorem \ref{Ja}, we have
\begin{eqnarray*}
&&[Y_{W}(a,x_1),Y_{W}(b,x_2)]\nonumber\\
&=&Y_{W}(\D (b a),x_2)x_1^{-1+\ep}\de\left(\frac{x_2}{x_1}\right)+Y_{W}(ab+b a,x_2)\left(x^\ep\frac{\pa}{\pa x_2}\right)x_1^{-1+\ep}\de\left(\frac{x_2}{x_1}\right)\\
&&+\frac{\ell}{12}\lan a,b\ran \left(x_2^{\ep}\frac{\pa}{\pa x_2}\right)^3x_{1}^{-1+\ep}\de\left(\frac{x_2}{x_1}\right)\\
&=&\left(x^\ep\frac{\pa}{\pa x}\right)Y_{W}(b a,x_2)x_1^{-1+\ep}\de\left(\frac{x_2}{x_1}\right)+Y_{W}(ab+ba,x_2)\left(x^\ep\frac{\pa}{\pa x_2}\right)x_1^{-1+\ep}\de\left(\frac{x_2}{x_1}\right)\\
&&+\frac{\ell}{12}\lan a,b\ran \left(x_2^{\ep}\frac{\pa}{\pa x_2}\right)^3x_{1}^{-1+\ep}\de\left(\frac{x_2}{x_1}\right)
\end{eqnarray*}
for $a,b\in\A$, where we use the fact (see \cite{L6})
$$Y_{W}(\D v,x)=\left(x^{\ep}\frac{\partial}{\partial x}\right)Y_{W}(v,x)
\ \ \mbox{ for }v\in V_{\widetilde L(\A)}(\ell,0).$$
 This proves that $W$ is a restricted module for
$\widetilde L^\ep(\A)$ of level $\ell$.
\end{proof}

\section*{Appendix}
We here establish some basic results we needed in the main body of the paper.

\begin{lemma}[A]
Let $V$ be a vertex algebra equipped with a $\bZ$-grading $V=\oplus_{n\in \bZ}V_{n}$. Suppose that
$U$ is a graded subspace such that $V$ as a vertex algebra is generated by $U$ and
\begin{eqnarray}
u_{r}V_{n}\subset V_{m+n-r-1}\ \ \ \mbox{ for }u\in U\cap V_{m},\ m,n,r\in \bZ.
\end{eqnarray}
Then $V$ is a $\bZ$-graded vertex algebra.
\end{lemma}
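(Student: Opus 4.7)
The strategy is to propagate the grading-shift property from the generators in $U$ to all of $V$ by induction on the length of vertex-operator words. Since $V$ is generated by $U$ as a vertex algebra and contains ${\bf 1}\in V_0$ (which is the standard part of the grading data), $V$ is linearly spanned by elements of the form
$$
w=u^{(1)}_{r_1}u^{(2)}_{r_2}\cdots u^{(k)}_{r_k}{\bf 1}
$$
with $u^{(i)}\in U\cap V_{m_i}$ homogeneous and $r_i\in\bZ$. The first step is a short induction on $k$ showing that each such word lies in $V_d$ where $d:=\sum_{i=1}^{k}(m_i-r_i-1)$; the inductive step is immediate from the hypothesis $u^{(1)}_{r_1}V_{d-m_1+r_1+1}\subset V_d$, and the base case $k=0$ uses ${\bf 1}\in V_0$.

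The main step is to strengthen this into a joint claim: for every such word $w$ of degree $d$, the vertex-operator components $w_r$ themselves shift degree by $d-r-1$, i.e.\ $w_rV_n\subset V_{d+n-r-1}$ for every $r,n\in\bZ$. The base $w={\bf 1}$ is trivial since ${\bf 1}_r=\delta_{r,-1}\,\id_V$. For the inductive step, write $w=u^{(1)}_{r_1}v$ with $v=u^{(2)}_{r_2}\cdots u^{(k)}_{r_k}{\bf 1}$ shorter, and invoke Borcherds' iterate formula
\begin{eqnarray*}
(u^{(1)}_{r_1}v)_r=\sum_{i\ge 0}(-1)^i\binom{r_1}{i}\left(u^{(1)}_{r_1-i}\,v_{r+i}-(-1)^{r_1}v_{r_1+r-i}\,u^{(1)}_i\right).
\end{eqnarray*}
In every term on the right, one factor shifts degree by the hypothesis on $U$ and the other by induction on $v$; the two shifts add to exactly $m_1+\deg v-r_1-r-2=d-r-1$, uniformly for both summands. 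This uniformity across the two halves of Borcherds' formula is the one place where the precise $-r-1$ shift in the hypothesis is essential, and it is the main bookkeeping obstacle.

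Finally, because the homogeneous words span the corresponding homogeneous components of $V$ (obtained by intersecting the total word-span with the direct sum decomposition $V=\oplus_d V_d$ and using the first step, so $V_m$ is precisely the linear span of words of expected degree $m$), any $v\in V_m$ is a linear combination of homogeneous words of expected degree $m$. By linearity the grading-shift property extends to all $v\in V_m$, giving $v_r V_n\subset V_{m+n-r-1}$ for all $r,n\in\bZ$, which together with ${\bf 1}\in V_0$ proves that $V$ is a $\bZ$-graded vertex algebra.
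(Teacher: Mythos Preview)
Your proof is correct and uses essentially the same mechanism as the paper: the Borcherds iterate formula is the key step showing that the degree-shift property propagates from $U$ to all of $V$. The only difference is packaging---you run an explicit induction on the length of words $u^{(1)}_{r_1}\cdots u^{(k)}_{r_k}{\bf 1}$, while the paper defines $K$ to be the span of homogeneous vectors $v$ for which every $v_n$ has degree $\deg v-n-1$, observes $U\cup\{{\bf 1}\}\subset K$, and uses the iterate formula to show $K$ is a vertex subalgebra, hence $K=V$.
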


\begin{proof} From the definition we need to prove that for every $v\in V_{m}$ with $m\in \bZ$ and for every $n\in \bZ$, $v_{n}$ is a homogeneous operator of degree $m-n-1$. Let $K$ be the linear span
of homogeneous vectors $v\in V$ such that
$$\deg v_{n}=\deg v-n-1\ \ \ \mbox{ for all }n\in \bZ.$$
Now we must prove $K=V$.  By assumption we have $U\subset K$ and it is clear that ${\bf 1}\in K$.
Recall the iterate formula: For $a,b\in V,\ m,n\in \bZ$,
\begin{eqnarray}\label{eiterate-formula}
(a_{m}b)_{n}=\sum_{i\ge 0}\binom{m}{i}(-1)^{i}\left( a_{m-i}b_{n+i}-(-1)^{m}b_{m+n-i}a_{i}\right).
\end{eqnarray}
It follows from this formula that $K$ is a graded vertex subalgebra. As $U$ generates $V$, we must have $K=V$.
\end{proof}

\begin{lemma}[B]
Let $V=\oplus_{n\in \bZ}V_{n}$ be a $\bZ$-graded vertex algebra and an $sl_{2}$-module
such that $sl_{2}\cdot {\bf 1}=0$. Suppose that
$U$ is a graded subspace such that $V$ as a vertex algebra is generated by $U$ and
\begin{eqnarray*}
&&[L(-1),u_{n}]=-nu_{n-1},\ \ \ \ \ \
[L(0),u_{n}]=(\deg u-n-1)u_{n},\\
&&[L(1),Y(u,x)]=Y\left((L(1)+2xL(0)+x^{2}L(-1))u,x\right)
\end{eqnarray*}
for homogeneous $u\in U$ and for every integer $n$.
Then $V$ is a M\"obius vertex algebra.
\end{lemma}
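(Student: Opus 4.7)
The plan is to run the same ``generating'' style argument used in Lemma A: define $K$ to be the span of homogeneous $v\in V$ for which the three hypotheses (with $u$ replaced by $v$ and $\deg u$ replaced by $\deg v$) hold, and then show $K=V$ by verifying $\mathbf{1}\in K$, $U\subset K$, and closure of $K$ under the operations $(a,b)\mapsto a_m b$. The inclusion $U\subset K$ is given, while $\mathbf{1}\in K$ follows immediately from $sl_{2}\cdot\mathbf{1}=0$ and $Y(\mathbf{1},x)=1$, since both sides of each of the three relations are then zero (note $\deg\mathbf{1}=0$, so the weight condition is consistent).

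For closure, suppose $u,v\in K$ are homogeneous. Since $V$ is a $\bZ$-graded vertex algebra, $u_m v\in V_{\deg u+\deg v-m-1}$ for every $m\in\bZ$. I would verify the three relations on $u_m v$ by plugging the iterate formula
$$(u_m v)_n=\sum_{i\geq 0}\binom{m}{i}(-1)^i\bigl(u_{m-i}v_{n+i}-(-1)^m v_{m+n-i}u_i\bigr)$$
into the commutators $[L(j),(u_m v)_n]$ with $j=-1,0,1$, treating $[L(j),\,\cdot\,]$ as a derivation of $\End V$. The assumed formulas for $[L(j),u_k]$ and $[L(j),v_k]$, together with standard binomial manipulations, should reproduce the desired identity on $u_m v$. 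In the cases $j=-1$ and $j=0$ this is a routine combinatorial matching, and the degree bookkeeping $\deg(u_m v)=\deg u+\deg v-m-1$ is precisely what makes the weight relation for $L(0)$ balance.

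The main obstacle will be the $j=1$ case. Here the right-hand side $Y(L(1)u,x)+2xY(L(0)u,x)+x^2 Y(L(-1)u,x)$ mixes three different weight-shifted inputs, so the iterate computation must produce terms involving $L(1)u,L(0)u,L(-1)u$ and their counterparts for $v$, and then recombine them into the analogous expression for $u_m v$. I would use the $sl_{2}$-bracket relations $[L(1),L(0)]=-L(1)$ and $[L(1),L(-1)]=2L(0)$ to commute the $L(j)$'s past one another, and invoke the Jacobi identity in its formal-variable form to reconcile the two sides. Equivalently, one can recast the three hypotheses as the single statement that $Y_V(u,x_1)Y_V(v,x_2)$ transforms in the appropriate $sl_2$-equivariant way, and then verify that this property is preserved by the Borcherds products.

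Once $K=V$ is established, the M\"obius axioms follow at once: (a) with $n=-1$ applied to $\mathbf{1}$ yields $L(-1)v=v_{-2}\mathbf{1}=\D v$, so $[L(-1),Y(v,x)]=Y(\D v,x)=\frac{d}{dx}Y(v,x)$ is the standard $\D$-property; (b) with $n=-1$ applied to $\mathbf{1}$, combined with $L(0)\mathbf{1}=0$, gives $L(0)v=(\deg v)\,v$, so $L(0)$ is the grading operator; and (c) is exactly the third M\"obius axiom. This completes the verification that $V$ is a M\"obius vertex algebra.
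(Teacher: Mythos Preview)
Your proposal is essentially correct and follows the same strategy as the paper: show that the set of vectors satisfying the three commutation relations is closed under the Borcherds products and hence equals $V$. The paper organizes the induction slightly differently, treating $L(0)$, $L(-1)$, $L(1)$ sequentially rather than bundled into one subspace $K$; in particular it disposes of $L(0)$ directly via $L(0)v=L(0)v_{-1}\mathbf 1=[L(0),v_{-1}]\mathbf 1=(\deg v)v$, and for $L(1)$ it works with the formal-variable Jacobi identity (your ``equivalently'' clause) rather than the component iterate formula. One small correction: the $sl_2$ internal brackets $[L(1),L(0)]$, $[L(1),L(-1)]$ are not what drives the $L(1)$ step---what is actually used is that the $L(0)$- and $L(-1)$-commutation relations with $Y(a,x_0)$ are already known (this is why the paper establishes those first), so that one can expand $L(0)Y(a,x_0)b$ and $L(-1)Y(a,x_0)b$ when computing $A(x_2)Y(a,x_0)b$.
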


\begin{proof} Just as in the proof of Lemma A, using (\ref{eiterate-formula}) we get
$\deg v_{n}=\deg v-n-1$
for every homogeneous vector $v\in V$ and for every integer $n$. It follows
that $L(0)|_{V_{m}}=m$ for $m\in \bZ$ as
$$L(0)v=L(0)v_{-1}{\bf 1}=v_{-1}L(0){\bf 1}+mv_{-1}{\bf 1}=mv\ \ \mbox{ for }v\in V_{m}.$$
Note that by assumption, we have $[L(-1),Y(u,x)]=\frac{d}{dx}Y(u,x)$ for $u\in U$.
Assume $$[L(-1),Y(a,x)]=\frac{d}{dx}Y(a,x),\ \ \ \ \
[L(-1),Y(b,x)]=\frac{d}{dx}Y(b,x)$$
for some $a,b\in V$. Using the vertex-operator form of (\ref{eiterate-formula}),
as in \cite{L2} (Lemma 3.1.8) we get
$$[L(-1),Y(Y(a,x_{0})b,x)]=\frac{\partial}{\partial x}Y(Y(a,x_{0})b,x).$$
Then it follows that
 $[L(-1),Y(v,x)]=\frac{d}{dx}Y(v,x)$ for all $v\in V$.

Next, we consider $L(1)$.
Set $A(x)=L(1)+2xL(0)+x^{2}L(-1)$. Suppose
 $$[L(1),Y(a,x)]=Y(A(x)a,x),\ \ \ \ [L(1),Y(b,x)]=Y(A(x)b,x)$$
for some $a,b\in V$. Then
\begin{eqnarray*}
&&[L(1),Y(Y(a,x_{0})b,x_{2})]\\
&=&\Res_{x_{1}}x_{0}^{-1}\delta\left(\frac{x_{1}-x_{2}}{x_{0}}\right)[L(1),Y(a,x_{1})Y(b,x_{2})]\\
&&-\Res_{x_{1}}x_{0}^{-1}\delta\left(\frac{x_{2}-x_{1}}{-x_{0}}\right)[L(1),Y(b,x_{2})Y(a,x_{1})]\\
&=&\Res_{x_{1}}x_{0}^{-1}\delta\left(\frac{x_{1}-x_{2}}{x_{0}}\right)
\left(Y(A(x_{1})a,x_{1})Y(b,x_{2})+Y(a,x_{1})Y(A(x_{2})b,x_{2})\right)\\
&&-\Res_{x_{1}}x_{0}^{-1}\delta\left(\frac{x_{2}-x_{1}}{-x_{0}}\right)
\left(Y(A(x_{2})b,x_{2})Y(a,x_{1})+Y(b,x_{2})Y(A(x_{1})a,x_{1})\right)\\
&=&\Res_{x_{1}}x_{2}^{-1}\delta\left(\frac{x_{1}-x_{0}}{x_{2}}\right)
\left[Y\left(Y(A(x_{1})a,x_{0})b,x_{2}\right)+Y\left(Y(a,x_{0})A(x_{2})b,x_{2}\right)\right]\\
&=&Y\left(Y(A(x_{2}+x_{0})a,x_{0})b,x_{2}\right)+Y\left(Y(a,x_{0})A(x_{2})b,x_{2}\right).
\end{eqnarray*}
On the other hand, we have
\begin{eqnarray*}
&&Y\left((L(1)+2x_{2}L(0)+x_{2}^{2}L(-1))Y(a,x_{0})b,x_{2}\right)\\
&=&Y(Y(a,x_{0})L(1)b,x_{2})+Y(Y((L(1)+2x_{0}L(0)+x_{0}^{2}L(-1))a,x_{0})b,x_{2})\\
&&+2x_{2}Y(Y(a,x_{0})L(0)b,x_{2})+2x_{2}Y(Y((L(0)+x_{0}L(-1))a,x_{0})b,x_{2})\\
&&+x_{2}^{2}Y(Y(a,x_{0})L(-1)b,x_{2})+x_{2}^{2}Y(Y(L(-1)a,x_{0})b,x_{2})\\
&=&Y(Y((L(1)+2(x_{2}+x_{0})L(0)+(x_{2}+x_{0})^{2}L(-1))a,x_{0})b,x_{2})\\
&&+Y\left(Y(a,x_{0})(L(1)+2x_{2}L(0)+x_{2}^{2}L(-1))b,x_{2}\right)\\
&=&Y\left(Y(A(x_{2}+x_{0})a,x_{0})b,x_{2}\right)+Y\left(Y(a,x_{0})A(x_{2})b,x_{2}\right).
\end{eqnarray*}
Thus
\begin{eqnarray*}
[L(1),Y(Y(a,x_{0})b,x_{2})]=Y\left(A(x_{2})Y(a,x_{0})b,x_{2}\right).
\end{eqnarray*}
Then it follows that $[L(1),Y(v,x)]=Y(A(x)v,x)$ for all $v\in V$.
Therefore, $V$ is a M\"obius vertex algebra.
\end{proof}

\begin{lemma}[C]
Let $\A$ be a commutative and associative algebra equipped with a symmetric
associative bilinear form $\lan\cdot,\cdot\ran$ and let $\widetilde{L}(\A)=\A\otimes \bC[t,t^{-1}]\oplus \bC \c$ be the corresponding Lie algebra with $\c$ central and with
$$[a\otimes t^{m},b\otimes t^{n}]=(m-n)(ab\otimes t^{m+n-1})+\frac{1}{12}m(m-1)(m-2)\lan a,b\ran \delta_{m+n-2,0}\c$$
for $a,b\in \A,\ m,n\in \bZ$.
Then $sl_{2}$ acts on $\widetilde{L}(\A)$ as a Lie algebra of derivations with
\begin{eqnarray*}
&&L(-1)\cdot (a\otimes t^{n})=-\frac{d}{dt}(a\otimes t^{n})=-n(a\otimes t^{n-1}),\\
&&L(0)\cdot (a\otimes t^{n})=\left(1-t\frac{d}{dt}\right)(a\otimes t^{n})=(1-n)(a\otimes t^{n}),\\
&&L(1)\cdot (a\otimes t^{n})=\left(2t-t^{2}\frac{d}{dt}\right)(a\otimes t^{n})=(2-n)(a\otimes t^{n+1})
\end{eqnarray*}
for $a\in \A,\ n\in \bZ$ and with $sl_{2}\cdot \c=0$.
\end{lemma}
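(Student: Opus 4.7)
The plan is to reduce the assertion to two elementary families of identities: the $sl_2$ commutation relations among $L(-1), L(0), L(1)$, and the derivation property (Leibniz rule) for each of these operators with respect to the bracket on $\widetilde L(\A)$. Since each of these operators annihilates $\c$ and acts on $a\otimes t^n$ as $a\otimes P_k t^n$ for the corresponding differential operator $P_k$ on $\bC[t,t^{-1}]$ (namely $P_{-1}=-d/dt$, $P_0=1-td/dt$, $P_1=2t-t^2 d/dt$), everything reduces to statements about the three operators $P_k$.

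For the $sl_2$ commutation relations, the actions on monomials are explicit: $P_{-1}t^n=-nt^{n-1}$, $P_0 t^n=(1-n)t^n$, $P_1 t^n=(2-n)t^{n+1}$. From this, the identities $[P_0,P_{\pm 1}]=\mp P_{\pm 1}$ and $[P_1,P_{-1}]=2P_0$ follow by one-line calculations on basis elements. Equivalently, one may identify $P_{-1},P_0,P_1$ with the inner derivations $\mathrm{ad}(1),\mathrm{ad}(t),\mathrm{ad}(t^2)$ of the Witt-type Lie algebra $(\bC[t,t^{-1}],\{f,g\}_W=f'g-fg')$, for which the three commutators come from $\{1,t\}_W=-1$, $\{1,t^2\}_W=-2t$, $\{t,t^2\}_W=-t^2$.

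For the derivation property, I would decompose the bracket on $\widetilde L(\A)$ as $[a\otimes f,\, b\otimes g]=ab\otimes(f'g-fg')+\omega(f,g)\lan a,b\ran\c$, where $\omega(t^m,t^n)=\frac{1}{12}m(m-1)(m-2)\delta_{m+n,2}$. Since each $P_k$ kills $\c$ and the algebra part carries no $\c$-component, the Leibniz rule for $L(k)$ decouples into an algebra identity $P_k(f'g-fg')=(P_k f)'g-(P_k f)g'+f'(P_k g)-f(P_k g)'$ on $\bC[t,t^{-1}]$ and a cocycle invariance identity $\omega(P_k f,g)+\omega(f,P_k g)=0$. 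The algebra identity is a short polynomial verification: the case $k=-1$ is just the product rule for $d/dt$, while the cases $k=0,1$ follow by applying that rule and collecting terms.

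The main obstacle, and the only step needing any real care, is the cocycle invariance. For $P_{-1}=-d/dt$ it is immediate from the observation that $\omega(f,g)=\frac{1}{12}\Res_t(f''' g\, dt)$ and the vanishing of the residue of the total derivative $(f'''g)'$. For $P_0$ and $P_1$ I would test on the monomial basis $f=t^m,\,g=t^n$, restricted to the supports $m+n=2$ and $m+n=1$ respectively; in the first case the sum reduces to $\frac{1}{12}m(m-1)(m-2)(2-m-n)$, which vanishes on $m+n=2$, while in the second case a similar combinatorial cancellation (after factoring $\frac{1}{12}m(m-1)(m+1)$ from the two contributions) yields the identity. With these facts in hand, the derivation property for each $L(k)$ follows, and the lemma is proved.
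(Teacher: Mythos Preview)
Your proposal is correct and follows essentially the same approach as the paper: verify the $sl_2$ relations among the three differential operators and then check the Leibniz rule on basis elements. The paper carries out the derivation check for $L(-1)$ and $L(1)$ directly on $a\otimes t^m,\ b\otimes t^n$ (computing the algebra and central contributions together in one line each) and then infers the case of $L(0)$ from $[L(1),L(-1)]=2L(0)$; you instead decouple the check into the Witt-bracket identity $P_k\{f,g\}_W=\{P_kf,g\}_W+\{f,P_kg\}_W$ and the cocycle invariance $\omega(P_kf,g)+\omega(f,P_kg)=0$, and treat each $k$. Your organization makes transparent why the algebra part holds (each $P_k$ is $\mathrm{ad}_W(t^{k+1})$ in the Witt algebra, hence automatically a derivation of $\{\cdot,\cdot\}_W$), while the paper's shortcut of deducing $L(0)$ from the commutator of two derivations saves one verification. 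Either way the content is the same short computation.
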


\begin{proof} First, we have the required commutation relations
\begin{eqnarray*}
&&\left[2t-t^{2}\frac{d}{dt},-\frac{d}{dt}\right]=\left[\frac{d}{dt},2t-t^{2}\frac{d}{dt}\right]
=2-2t\frac{d}{dt},\\
&&\left[1-t\frac{d}{dt},-\frac{d}{dt}\right]=\left[\frac{d}{dt},1-t\frac{d}{dt}\right]
=-\frac{d}{dt},\\
&&\left[1-t\frac{d}{dt},2t-t^{2}\frac{d}{dt}\right]
=-\left[t\frac{d}{dt},t\left(2-t\frac{d}{dt}\right)\right]
=-t\left(2-t\frac{d}{dt}\right)=-\left(2t-t^{2}\frac{d}{dt}\right).
\end{eqnarray*}
For $a,b\in \A,\ m,n\in \bZ$, we have
\begin{eqnarray*}
&&[L(-1)(a\otimes t^{m}),b\otimes t^{n}]+[a\otimes t^{m},L(-1)(b\otimes t^{n})]\\
&=&-m[a\otimes t^{m-1},b\otimes t^{n}]-n[a\otimes t^{m},b\otimes t^{n-1}]\\
&=&-m(m-1-n)(ab\otimes t^{m+n-2})-\frac{1}{12}m(m-1)(m-2)(m-3)\delta_{m+n-3,0}\lan a,b\ran\c\\
&&-n(m-n+1)(ab\otimes t^{m+n-2})-\frac{1}{12}nm(m-1)(m-2)\delta_{m+n-3,0}\lan a,b\ran\c\\
&=&(m-n)(1-m-n)(ab\otimes t^{m+n-2})\\
&=&L(-1)[a\otimes t^{m},b\otimes t^{n}],
\end{eqnarray*}
\begin{eqnarray*}
&&[L(1)(a\otimes t^{m}),b\otimes t^{n}]+[a\otimes t^{m},L(1)(b\otimes t^{n})]\\
&=&(2-m)[a\otimes t^{m+1},b\otimes t^{n}]+(2-n)[a\otimes t^{m},b\otimes t^{n+1}]\\
&=&(2-m)(m+1-n)(ab\otimes t^{m+n})+\frac{1}{12}(2-m)(m^{3}-m)\delta_{m+n-1,0}\lan a,b\ran\c\\
&&+(2-n)(m-n-1)(ab\otimes t^{m+n})+\frac{1}{12}(2-n)m(m-1)(m-2)\delta_{m+n-1,0}\lan a,b\ran\c\\
&=&(m-n)(3-m-n)(ab\otimes t^{m+n})\\
&=&L(1)[a\otimes t^{m},b\otimes t^{n}].
\end{eqnarray*}
This proves that $L(-1)$ and $L(1)$ act as derivations. As $[L(1),L(-1)]=2L(0)$, $L(0)$ also acts as a derivation.
\end{proof}

\begin{center}
{\bf Acknowledgements}
\end{center}

We gratefully acknowledge the partial financial support from NSFC (11271202, 11221091) and SRFDP (20120031110022) for C. Bai; NSA
(H98230-11-1-0161) and NSFC (11128103) for H. Li; NSFC (11101285,11371134), the
Shanghai Natural Science Foundation (11ZR1425900), and ZJNSF (LQ12A01005) for Y. Pei. Part of this work was done while the third author was visiting Rutgers University-Camden and he wishes to thank the Mathematics Department for its support and hospitality.

\end{document}